\documentclass{amsart}
\usepackage[all]{xy}
\usepackage{amssymb}

\usepackage{tikz}
\usepackage{color}
\usepackage{comment}
\usepackage{graphicx}
\usepackage{float}
\usepackage{xcolor}
\usepackage{xspace}
\usepackage{enumerate}


%

%
%

\definecolor{ruta2}{rgb}{0.409, 0.459, 0.208}
\definecolor{vino}{rgb}{0.4,0,0}
\definecolor{ruta}{rgb}{0.153,0.076,0.0}

\let\cal\mathcal
\def\Ascr{{\cal A}}

\def\Escr{{\cal E}}
\def\Fscr{{\cal F}}
\def\Gscr{{\cal G}}
\def\Hscr{{\cal H}}

\def\Kscr{{\cal K}}
\def\Lscr{{\cal L}}
\def\Mscr{{\cal M}}
\def\Nscr{{\cal N}}
\def\Oscr{{\cal O}}

\def\Tscr{{\cal T}}

\def\Xscr{{\cal X}}
\def\Yscr{{\cal Y}}

%
%
\let\blb\mathbb

\def \ZZ{{\blb Z}}

\def \NN{{\blb N}}

\def\quot{/\!\!/}

\def\Mod{\operatorname{Mod}}

\def\Qch{\operatorname{Qch}}
\def\coh{\mathop{\text{\upshape{coh}}}}

\def\Spec{\operatorname {Spec}}

\def\Ext{\operatorname {Ext}}
\def\Hom{\operatorname {Hom}}
\def\uHom{\operatorname {\mathcal{H}\mathit{om}}}
\def\uEnd{\operatorname {\mathcal{E}\mathit{nd}}}
\def\End{\operatorname {End}}
\def\RHom{\operatorname {RHom}}
\def\uRHom{\operatorname {R\mathcal{H}\mathit{om}}}

\def\coker{\operatorname {coker}}

\def\End{\operatorname {End}}

\def\Pic{\operatorname {Pic}}

\def\gldim{\operatorname {gl\,dim}}

\def\r{\rightarrow}

\DeclareMathOperator{\Proj}{Proj}

\def\Ehr{\operatorname {Ehr}}
\def\Vol{\operatorname {Vol}}

\newcommand{\s}{\sigma}
\newcommand{\la}{\langle}
\newcommand{\ra}{\rangle}

\def\Xs{X^{\mathbf{s}}}

%
%

\newtheorem{lemma}{Lemma}[section]
\newtheorem{proposition}[lemma]{Proposition}
\newtheorem{theorem}[lemma]{Theorem}
\newtheorem{corollary}[lemma]{Corollary}

\theoremstyle{definition}

\newtheorem{example}[lemma]{Example}
\newtheorem{definition}[lemma]{Definition}

{

}

\theoremstyle{remark}

\newtheorem{remark}[lemma]{Remark}

\newdimen\uboxsep \uboxsep=1ex
\def\uboxn#1{\vtop to 0pt{\hrule height 0pt depth 0pt\vskip\uboxsep
\hbox to 0pt{\hss #1\hss}\vss}}

\def\uboxs#1{\vbox to 0pt{\vss\hbox to 0pt{\hss #1\hss}
\vskip\uboxsep\hrule height 0pt depth 0pt}}

\def\codim{\operatorname{codim}}

\def\Stab{\operatorname{Stab}}
\let\oldmarginpar\marginpar
\def\marginpar#1{ \oldmarginpar{\tiny #1}}

\def\Sym{\operatorname{Sym}}
\title{Non-commutative crepant resolutions for some toric singularities I}
\author{\v{S}pela \v{S}penko}
\address{(\v{S}pela \v{S}penko) {\tt spela.spenko@ed.ac.uk}\\ School of Mathematics\\
  The University of Edinburgh\\
  James Clerk Maxwell Building\\
  The King's Buildings\\
  Peter Guthrie Tait Road\\
  EDINBURGH\\
  EH9 3FD \\
  Scotland, UK} \author{Michel Van den Bergh} 
\address{(Michel Van den Bergh) {\tt michel.vandenbergh@uhasselt.be}\\ Department of mathematics\\Universiteit Hasselt\\
  Martelarenlaan 42\\
  3500 Hasselt\\
  Belgium} 
\thanks{The first author is supported by EPSRC grant EP/M008460/1.}
\thanks{The second author is a senior researcher at the
  Research Foundation Flanders (FWO). While working on this project he was supported by
the FWO grant G0D8616N: ``Hochschild cohomology and deformation theory of triangulated categories''.}
\keywords{Non-commutative resolutions, toric singularities}
\subjclass{13A50,14L24,16E35}
\begin{document}
\begin{abstract}
We give a criterion for the existence of non-commutative crepant resolutions (NCCR's) for certain toric  singularities.
In particular we recover Broomhead's result that a 3-dimensional toric Gorenstein singularity
has an NCCR. Our result also yields the existence of an NCCR for a 4-dimensional toric Gorenstein singularity which
is known to have no toric NCCR.
\end{abstract}
\maketitle
\section{Introduction}
In this note we discuss the existence of non-commutative crepant resolutions (NCCRs) for some toric singularities. 
Let us first recall the definition. For the rationale behind the definition of an NCCR see \cite{VdB32,Leuschke}. 
Throughout $k$ is an algebraically closed field of characteristic zero. 
\begin{definition}\cite{DaoIyama,Leuschke,SVdB,VdB32,Wemyss1}
Let $R$ be a normal Gorenstein 
 domain. A \emph{non-commutative crepant resolution} (NCCR) of $R$
is an $R$-algebra of finite global dimension 
   of the form $\Lambda=\End_R(M)$  which in addition is Cohen-Macaulay as $R$-module and where $M$ is a non-zero finitely generated 
reflexive
$R$-module.
\end{definition}

 The following proposition is a combination of our main results. 
For a representation $X$ of a reductive group $G$ we denote by $X^u:=\{x\in X\mid 0\in \overline{Gx}\}$ the unstable locus. 
\begin{proposition}[\S \ref{combi}]
\label{prop:main}
Let $W$ be a generic  unimodular representation of an abelian reductive group $G$ over $k$, and let  $X:=\Spec \Sym(W)=W^\vee$. 
If  
$\dim \hbox{{$X^u$}}-\dim G\le 1$ 
then  $\Sym(W)^G$ has an NCCR.
\end{proposition}
For the definition of a generic, unimodular representation see 
 Definitions \ref{def:generic}, \ref{def:unimodular}, respectively. Recall that an abelian reductive group over $k$ is a product of a
torus and a finite abelian group.

\medskip

Proposition \ref{prop:main} gives a relatively easy proof that three-dimensional toric Gorenstein singularities have an NCCR (see Corollary \ref{cor:nccr2}), a fact first proved 
by Broomhead \cite{Broom}. Actually Broomhead establishes the existence of a ``toric'' \cite{Bocklandt} NCCR ($M$ is a sum of reflexive ideals)
which is much more difficult and relies on the theory of dimer models. In \cite{SVdB5}  we give an alternative proof of Broomhead's result which is however still not easy.

\medskip

In \cite[\S10.1]{SVdB} we constructed toric NCCRs for toric rings coming from quasi-symmetric representations $W$ (e.g. self-dual), and showed that in general toric NCCRs do not always exist. 
In other words, Broomhead's result does not extend to higher dimension.
In fact in \cite[\S10.1]{SVdB} we gave an example of a 4-dimensional toric Gorenstein singularity which does not 
have a toric NCCR. Using Proposition \ref{prop:main} we can now show that it nevertheless has a non-toric NCCR. See Example \ref{ex:toric} below. 
On the other hand,  Higashitani and Nakajima \cite{Nakajima} recently constructed toric NCCRs for some natural examples of toric rings not coming from quasi-symmetric representations.

\section{Acknowledgment}
We thank the referee for his careful reading of the paper and many useful comments which considerably improved its exposition.

\section{Notation and conventions}
All
objects are defined over $k$. If $\Xscr$ is a stack then we write $D(\Xscr)$ for the unbounded derived category $D_{\Qch}(\Mod(\Oscr_\Xscr))$ of $\Oscr_\Xscr$-modules with quasi-coherent cohomology. 

\medskip

For a reductive group $G$ we denote by $X(G)$
 (resp. $Y(G))$ the character group (resp. the group of one-parameter subgroups) of $G$. 
There is a natural pairing $Y(G)\times X(G)\to \ZZ$, we denote it by $\la\,,\,\ra$.

If a reductive group $G$ acts on an affine variety $X$ and $\chi\in X(G)$ is a character then
we write $X^{ss,\chi}$ for the open subset of $X$ consisting of the $\chi$-semi-stable points in $X$.
In other words (following \cite{King}), $X^{ss,\chi}$ consists of the points $x\in X$ such that for
$\lambda\in Y(G)$ of $G$ with the property that $\lim_{t\r 0} \lambda(t)x$ exists then $\langle \lambda,\chi\rangle \ge 0$.
We say that $x\in X$ is stable if it has a closed orbit and a finite stabilizer. We denote the locus of stable points in $X$ by $X^s$. We have $X^s\subset X^{ss,\chi}$ for
any $\chi$. Moreover, we write $X^{\mathbf{s}}\subset X$ for the set of points with closed orbit and trivial stabilizer.

We write $X^u=\{x\mid 0\in \overline{Gx}\}$ for the $G$-unstable locus or nullcone.

The inclusions between the  open subschemes of $X$ that were introduced are summarized in the following diagram 
\begin{equation*}
\xymatrix{
X^{\mathbf{s}}\ar@{^(->}[r] & X^{s}\ar@{^(->}[r]\ar@{^(->}[d] & X^{ss,\chi}\ar@{^(->}[d]\\
& X\setminus X^u\ar@{^(->}[r] & X\\
}
\end{equation*}

\begin{definition}
\label{def:generic}
We say that a reductive group $G$ acts \emph{generically} on a smooth affine  variety $X$ if
$\codim  (X-\Xs,X) \ge 2$.
If $W$ is a $G$-representation then we  say that $W$ is \emph{generic} if $G$ acts generically on
$\Spec \Sym(W)\cong W^\vee$.
\end{definition}

\begin{definition}\label{def:unimodular}
Let $W$ be $d$-dimensional representation of an algebraic group $G$. 
We say that $W$ is a {\em unimodular} if $\wedge^d W\cong k$, where $k$ is the trivial representation.
\end{definition}

A variety is an integral separated scheme of finite type over $k$. 
If $X$ is a variety with an action of a reductive group $G$ then we consider a $G$-equivariant sheaf on $X$ as a sheaf on the stack $X/G$.


\section{Main result}

The next theorem extends \cite[Thm.\ 5.1]{VdB32} to certain Deligne-Mumford stacks \cite{DM,Champs,Olsson}. As a consequence we obtain NCCRs  (see Corollary \ref{cor:NCCR}). 
The proof of the theorem is given in \S\ref{sec:proof}. 

Let us recall that if $X$ is an affine variety then $X\quot G=\Spec k[X]^G$, $X^{ss,\chi}\quot G=\Proj \Gamma_*(X)^G$ where $\Gamma_*(X)=\bigoplus_n \Gamma(X,\chi^{-n}\otimes \Oscr_X)$. 
Note that $\Gamma_0(X)=k[X]^G$ and the inclusion $\Gamma_0(X)\hookrightarrow\Gamma_*(X)$ defines a natural projective map $\theta:X^{ss,\chi}\quot G\r X\quot G$.
\begin{theorem}
\label{th:dim1}
Let $G$ be an abelian reductive group over $k$  and let $X$ be a smooth affine
$G$-variety containing a $G$-stable point.  
  Let $\chi\in
X(G)$ be a character such that every point in $X^{ss,\chi}$ has finite
stabilizer (i.e.\ $X^{ss,\chi}/G$ is a Deligne-Mumford stack) and
assume in addition that $\theta:X^{ss,\chi}\quot G\r X\quot G$ has fibers of
dimension $\le 1$. Then $\coh(X^{ss,\chi}/G)$ contains an object
$\Tscr$ with the following properties.
\begin{enumerate}
\item \label{th:dim1:a} $\Tscr$ is a vector bundle on $X^{ss,\chi}$.
\item \label{th:dim1:b} $\Ext^i_{X^{ss,\chi}/G}(\Tscr,\Tscr)=0$ for $i>0$.
\item \label{th:dim1:c} $\Tscr$ is a generator\footnote{See Definition \ref{def:generator} for the definition of the generation.} for $D(X^{ss,\chi}/G)$.  
\end{enumerate}
\end{theorem}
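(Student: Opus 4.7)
The plan is to take $\Tscr=\bigoplus_{\mu\in\Lambda}\Oscr_{X^{ss,\chi}}(\mu)$, a direct sum of $G$-equivariant line bundles indexed by a finite subset $\Lambda\subset X(G)$ to be chosen. Since the underlying non-equivariant bundle of each $\Oscr(\mu)$ is trivial, $\Tscr$ is a trivial rank-$|\Lambda|$ vector bundle on $X^{ss,\chi}$, so property \eqref{th:dim1:a} is automatic. All the content of the theorem lies in choosing $\Lambda$ so that \eqref{th:dim1:b} and \eqref{th:dim1:c} hold simultaneously.

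For \eqref{th:dim1:c} I would apply a local criterion on the DM stack $X^{ss,\chi}/G$: \'etale-locally around a closed point with (finite abelian) stabilizer $H\subset G$ the stack takes the form $[N/H]$ for the normal slice $N$, and $\Tscr$ is a local generator as soon as the restriction map $X(G)\to\widehat{H}$ sends $\Lambda$ surjectively onto $\widehat{H}$. Only finitely many stabilizers $H$ arise, so this translates into a finite combinatorial condition on $\Lambda$.

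The core of the argument is \eqref{th:dim1:b}. Writing $\pi:X^{ss,\chi}/G\to X\quot G$ for the good moduli map and using that $X\quot G$ is affine, Leray gives
\[
\Ext^i_{X^{ss,\chi}/G}(\Tscr,\Tscr)=\bigoplus_{\mu\in\Lambda-\Lambda}\Gamma\bigl(X\quot G,\,R^i\pi_\ast\Oscr(\mu)\bigr).
\]
The fiber-dimension hypothesis together with cohomology and base change forces $R^i\pi_\ast=0$ for $i\ge 2$, so it suffices to arrange $R^1\pi_\ast\Oscr(\mu)=0$ for every $\mu\in\Lambda-\Lambda$. Pointwise this reduces to the vanishing of $H^1$ of $\Oscr(\mu)$ on each at-most one-dimensional geometric fiber of $\pi$; these fibers are proper stacky curves, and on each such curve the vanishing is a positivity condition controlled by the finitely many one-parameter subgroups of $G$ responsible for instability---the same ``window'' inequalities already used in the scheme case of \cite[Thm.\ 5.1]{VdB32}.

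The main obstacle will be reconciling the two constraints on $\Lambda$: the generator condition wants $\Lambda$ large enough to surject onto every $\widehat{H}$, whereas the tilting condition confines $\Lambda-\Lambda$ to a bounded window $\Sigma\subset X(G)_\RR$. Because $G$ is abelian, the characters with a prescribed restriction to $H$ form a coset of the finite-index sublattice $X(G/H)\subset X(G)$, so the generator condition is the purely combinatorial demand that $\Lambda$ meet every coset of every $X(G/H)$. This can be arranged inside $\Sigma$ provided the window is large enough to contain a fundamental domain for each such sublattice, which is precisely the flexibility afforded by the abelianness of $G$ and the small-fiber hypothesis---exactly the ingredient that is lacking in the general reductive setting and that makes the extension from the scheme case of \cite[Thm.\ 5.1]{VdB32} to the DM case possible.
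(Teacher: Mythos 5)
Your proposal's central tactic—to realize $\Tscr$ as a direct sum of $G$-equivariant line bundles $\bigoplus_{\mu\in\Lambda}\mu\otimes\Oscr_{X^{ss,\chi}}$—cannot succeed in general, and the paper itself shows this. Observe that if such a $\Tscr$ existed then, via Corollary \ref{cor:dim1} and Corollary \ref{cor:NCCR}, the module $T=\bigoplus_{\mu\in\Lambda}M(\mu)$ (a direct sum of modules of covariants) would give a NCCR. But Example \ref{ex:toric} exhibits a generic unimodular $W$ satisfying the hypotheses of the theorem for which no NCCR given by a module of covariants exists—precisely the statement, proved in \cite[\S10.1]{SVdB}, that no ``toric'' NCCR exists there. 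So for that example no finite $\Lambda$ works, and your proof strategy breaks.

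The concrete place where your argument goes astray is the symmetry of $\Lambda-\Lambda$. Your Leray computation is correct: you need $R^1\theta_\ast$ of $\pi_{s,\ast}\bigl(\mu\otimes\Oscr_{X^{ss,\chi}}\bigr)$ to vanish for every $\mu\in\Lambda-\Lambda$. But the conditions that kill $R^1$ on the one-dimensional fibers of $\theta$ are positivity (or ``half-window'') conditions that are not symmetric under $\mu\mapsto -\mu$, whereas $\Lambda-\Lambda$ is symmetric. Your ``bounded window large enough to contain a fundamental domain'' picture does not reconcile this: enlarging the window to hit every coset of every $X(G/H)$ only makes the symmetric set $\Lambda-\Lambda$ larger and pushes more of it outside the half-window where $R^1$ vanishes. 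There is also a second, subsidiary gap: matching $\Lambda$ onto the duals $\widehat H$ of the stabilizers only gives \emph{local} generation; passing to global generation requires adjoining twists by powers of an ample bundle on $X^{ss,\chi}\quot G$, as in Lemma \ref{lem:compact}.

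The paper's proof handles the asymmetry directly rather than trying to avoid it. After Lemma \ref{lem:generator} one has line bundles $\Lscr_1,\dots,\Lscr_N$ which generate, with $\Ext^{\ge 2}$ vanishing (from the fiber-dimension bound and rational singularities, Lemma \ref{lem:vanishing}) and $\Ext^1(\Lscr_u,\Lscr_v)=0$ only for $u<v$ — the order coming from choosing the ample twists $m_1\ll l+m_1\ll m_2\ll\cdots$ so that Serre vanishing applies in one direction. The remaining nonzero $\Ext^1(\Lscr_u,\Lscr_v)$ for $u>v$ are then absorbed by forming iterated semi-universal extensions à la Hille--Perling \cite[Lemma 3.1]{HillePerling1}. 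The resulting $\Tscr$ is still a vector bundle on $X^{ss,\chi}$ (an iterated extension of line bundles), which is all statement \eqref{th:dim1:a} requires, but it is emphatically not a direct sum of line bundles. This extension step is the essential idea you are missing, and it is exactly what produces, in Example \ref{ex:toric}, the non-covariant summand $K$ defined by the sequence \eqref{covses}.
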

An object satisfying \eqref{th:dim1:a}\eqref{th:dim1:b}\eqref{th:dim1:c} is
sometimes called a \emph{tilting bundle}. 
\begin{remark} \label{rem:dm}
If $G$ is an
  abelian reductive group acting linearly on an affine variety then
  $X^{ss,\chi}/G$ will be a Deligne-Mumford stack if $\chi\in X(G)$ is
  chosen  generically. Indeed we may choose a closed embedding of $X$
in a $G$-representation and for a representation the claim follows from \cite[Theorem 14.3.14]{CoxLittleSchenck} 
(see also  \cite[Proposition 2.1]{HLSam}).
\end{remark}

Whenever we are in the setting of Theorem
  \ref{th:dim1} we will use the following diagram 
\begin{equation}
\label{eq:maps}
\xymatrix{
X^{ss,\chi}\ar@{^(->}[r]^{\tilde{\theta}}\ar[d]\ar@/_3em/[dd]_{\pi} & X\ar[d]\ar@/^3em/[dd]^{\gamma}\\
X^{ss,\chi}/G\ar@{^(->}[r]\ar[d]^{\pi_s} & X/G\ar[d]_{\gamma_s}\\
X^{ss,\chi}\quot G\ar[r]_{\theta} & X\quot G
}
\end{equation}
where $\tilde{\theta}$ is an inclusion, $\theta$ the induced map on the quotients (coming from the definition of the quotients), $\pi$, $\gamma$ are quotient maps, and $\pi_s$, $\gamma_s$ are stack morphisms. More precisely, the morphism $\gamma:X\to X\quot G$ is $G$-equivariant 
and hence it factors through $X/G$ which yields $\gamma_s$. 
A similar statement holds for $\pi_s$. 

Under some genericity conditions (in the sense of Definition \ref{def:generic}) one may obtain an NCCR from Theorem \ref{th:dim1}. We denote by $R=k[X]^G$ the coordinate ring of $X\quot G$.

\begin{corollary}
\label{cor:dim1} Let $X,G, \chi,\Tscr$ be as in Theorem
  \ref{th:dim1}. Then $D(X^{ss,\chi}/G)\cong D(\Lambda)$ where
  $\Lambda=\End_{X^{ss,\chi}/G}(\Tscr)$. One has $\gldim\Lambda< \infty$.
 Moreover, if 
  $G$ acts generically on $X$ 
  then $\Lambda=\End_R(T)$ 
  where  
  $T=\Gamma(X^{ss,\chi}/G,\Tscr)=\Gamma(X^{ss,\chi},\Tscr)^G$ which is a reflexive $R$-module. 
\end{corollary}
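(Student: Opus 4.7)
The plan has three parts: establishing the derived equivalence, bounding the global dimension, and identifying $\Lambda$ with $\End_R(T)$ together with the reflexivity of $T$.

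First I would invoke standard tilting theory. Since $\Tscr$ is a classical tilting bundle on $X^{ss,\chi}/G$ by Theorem \ref{th:dim1}, the functor $\RHom(\Tscr,-)\colon D(X^{ss,\chi}/G)\xrightarrow{\sim} D(\Lambda)$ is a derived equivalence with quasi-inverse $-\Lotimes_\Lambda \Tscr$. Because $X$ is smooth and stabilizers on $X^{ss,\chi}$ are finite by hypothesis, $X^{ss,\chi}/G$ is a smooth Deligne-Mumford stack of dimension $\dim X-\dim G$, so every coherent sheaf on it admits a locally free resolution of bounded length. Transporting such resolutions through the equivalence bounds the projective dimension of every finitely generated $\Lambda$-module uniformly, giving $\gldim\Lambda<\infty$.

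Second, I would compute $\Lambda$ via the diagram \eqref{eq:maps}:
\[
\Lambda=\Gamma\bigl(X^{ss,\chi}/G,\uEnd(\Tscr)\bigr)=\Gamma\bigl(X\quot G,\theta_{\ast}\pi_{s\ast}\uEnd(\Tscr)\bigr),
\]
and similarly $T=\Gamma(X\quot G,\theta_{\ast}\pi_{s\ast}\Tscr)$. The genericity hypothesis enters as follows. By Definition \ref{def:generic}(1), the open subset $\Xs\subset X$ consists of points with closed orbit and trivial stabilizer, so $\Xs\subset X^{ss,\chi}$ and $\Xs/G\to U:=\Xs\quot G$ is an isomorphism. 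By Definition \ref{def:generic}(2), $\codim(X-\Xs,X)\ge 2$; since generic $G$-orbits in $\Xs$ have dimension $\dim G$, the complement $X\quot G-U$ also has codimension $\ge 2$ in the normal scheme $X\quot G$.

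Third, on the big open $U$ both sheaves $\theta_{\ast}\pi_{s\ast}\uEnd(\Tscr)$ and $\uEnd_R(T)$ restrict to $\uEnd_U(T|_U)$, the endomorphism sheaf of a vector bundle on a smooth scheme, which is in particular reflexive. Since reflexive sheaves on the normal scheme $X\quot G$ extend uniquely across complements of codimension $\ge 2$, both global sheaves equal this common reflexive extension; taking global sections yields $\Lambda=\End_R(T)$. Applying the same extension argument directly to $\Tscr$ (rather than to $\uEnd(\Tscr)$) shows that $T$ is a reflexive $R$-module. The main delicate step is the codimension-$\ge 2$ control after passing to the GIT quotient, together with verifying that $\theta_{\ast}\pi_{s\ast}\Tscr$ and $\theta_{\ast}\pi_{s\ast}\uEnd(\Tscr)$ have no sections supported on $X\quot G-U$ so that the reflexive-extension argument applies; the latter is immediate since both sheaves are torsion-free ($\Tscr$ being locally free).
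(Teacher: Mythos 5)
Your first two parts (the tilting equivalence and finite global dimension from smoothness of the quotient stack) match the paper's treatment. For the third part you take a more hands-on route than the paper: rather than invoking the monoidal equivalence between reflexive sheaves on $X^{ss,\chi}$, $G$-equivariant reflexive sheaves on $X$, and reflexive sheaves on $X\quot G$ (the paper cites \cite[\S3,4]{SVdB} and transports $\Tscr\otimes\Tscr^\vee$ through these equivalences), you restrict to the free locus $U=\Xs\quot G$ and extend across its codimension-$\ge 2$ complement. That strategy is essentially what underlies the cited machinery, so it is a legitimate alternative, but two of the justifications you offer do not quite support the conclusions they are attached to.

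First, the codimension claim $\codim(X\quot G-U,X\quot G)\ge 2$ is not a formal consequence of ``generic orbits in $\Xs$ have dimension $\dim G$''. One must use upper semi-continuity of fiber dimension for $\gamma:X\to X\quot G$ (whose generic fiber has dimension $\dim G$) to conclude that every fiber over $X\quot G-U=\gamma(X-\Xs)$ has dimension $\ge\dim G$, and only then does $\dim(X-\Xs)\le\dim X-2$ give $\dim\gamma(X-\Xs)\le\dim X-\dim G-2$.

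Second, and more seriously, you claim that the reflexive-extension argument for $\theta_\ast\pi_{s,\ast}\uEnd(\Tscr)$ and $\theta_\ast\pi_{s,\ast}\Tscr$ ``is immediate since both sheaves are torsion-free.'' Torsion-freeness only yields an \emph{injection} of each sheaf into $j_\ast$ of its restriction to $U$; it says nothing about the surjectivity you need to identify the sheaf with the reflexive extension. The correct input is a Hartogs-type extension \emph{upstairs}: $\Tscr$ and $\uEnd(\Tscr)$ are locally free on the smooth variety $X^{ss,\chi}$, and $\codim(X^{ss,\chi}-\Xs,X^{ss,\chi})\ge 2$, so $G$-invariant sections over $\Xs$ (and over $\pi^{-1}(V)\cap\Xs$ for any open $V$ of $X^{ss,\chi}\quot G$) extend to $X^{ss,\chi}$ (resp.\ $\pi^{-1}(V)$). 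It is this extension, pushed down, that identifies $\theta_\ast\pi_{s,\ast}\uEnd(\Tscr)$ and $\theta_\ast\pi_{s,\ast}\Tscr$ with $j_\ast$ of their restrictions to $U$, and hence with the reflexive sheaves $\uEnd_R(T)$ and $\tilde T$. As written, your final paragraph would only give one inclusion $\Lambda\subseteq\End_R(T)$. Once this step is repaired the argument goes through and reaches the same conclusion as the paper's proof via \cite[\S3,4]{SVdB}.
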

\begin{proof} The derived equivalence claim follows from \cite[Theorem 4.3]{Keller1}. The derived equivalence implies  $\gldim\Lambda\le \infty$
since $X^{ss,\chi}/G$ is smooth (see \cite[Theorem 7.6]{HilleVdB}). 
We now refer to \cite[\S3,4]{SVdB} for some generalities concerning
reflexive sheaves we use below. Recall in particular that reflexive sheaves $\Fscr$, $\Gscr$
on a normal variety $Z$ form a rigid monoidal category with tensor product
$\Fscr\otimes \Gscr:=(\Fscr\otimes_Z\Gscr)^{\vee\vee}$.
Assume that $\codim (X-X^{\bf s},X)\ge 2$. Then also 
$\codim (X-X^{ss,\chi},X)\ge 2$ and hence $\tilde{\theta}_\ast$ defines a monoidal
equivalence between the categories of reflexive sheaves on $X^{ss,\chi}$
and $X$. Using again the condition $\codim (X-X^{\bf s},X)\ge 2$,
taking $G$-invariants defines a monoidal equivalence between $G$-equivariant
reflexive sheaves on $X$ and reflexive sheaves on $X\quot G$ by \cite[Lemma 4.1.3]{SVdB5}.  
Therefore $T=\Gamma(X^{ss,\chi},\Tscr)^G=\Gamma(X,\tilde{\theta}_\ast\Tscr)^G=\Gamma(X\quot G,(\gamma_*\tilde{\theta}_\ast\Tscr)^G)$, and in particular $T$ is a reflexive $R$-module. 
Again using the mentioned monoidal equivalences we obtain 
$\Lambda=\End_{X^{ss,\chi}/G}(\Tscr)=\End_{X\quot G}((\gamma_*\tilde{\theta}_\ast\Tscr)^G)=\End_R(T)$. 
\end{proof}

\begin{corollary} \label{cor:NCCR} 
Let $X,G,\chi$ be as in Theorem \ref{th:dim1}.  Assume in addition that
$X=W^\vee$ where $W$ is generic unimodular $G$-representation.
Then $R=\Sym(W)^G$ has an NCCR.
\end{corollary}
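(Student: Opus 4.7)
The plan is to verify all four defining properties of an NCCR for the algebra $\Lambda=\End_R(T)$ constructed in Corollary~\ref{cor:dim1}. Three of the four are immediate from that corollary: $\Lambda$ has the form $\End_R(M)$ with $M=T$ a non-zero finitely generated reflexive $R$-module, and $\gldim\Lambda<\infty$. Moreover $R=\Sym(W)^G$ is a normal Gorenstein domain, normality coming from the reductivity of $G$ and the Gorenstein property from the unimodularity of $W$. Hence the only property left to establish is that $\Lambda$ is Cohen-Macaulay as an $R$-module.

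For this I would use Grothendieck duality along the crepant map $f_s=\theta\pi_s\colon X^{ss,\chi}/G\to X\quot G$ from diagram~\eqref{eq:maps}. Three inputs are in hand: Lemma~\ref{lem:crep} gives $\omega_{X^{ss,\chi}/G}\cong\Oscr_{X^{ss,\chi}/G}$; Lemma~\ref{lem:vanishing} gives $Rf_{s,*}\Oscr_{X^{ss,\chi}/G}=\Oscr_{X\quot G}$ together with $R^if_{s,*}=0$ for $i>1$; and the tilting hypothesis $\Ext^{>0}_{X^{ss,\chi}/G}(\Tscr,\Tscr)=0$, combined with the affineness of $X\quot G$, forces $R^if_{s,*}\HEnd(\Tscr)=0$ for $i>0$, so that $Rf_{s,*}\HEnd(\Tscr)\cong\Lambda$ concentrated in degree zero.

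Crepancy together with $Rf_{s,*}\Oscr=\Oscr$ should yield $f_s^!\Oscr_{X\quot G}\cong\Oscr_{X^{ss,\chi}/G}$, and Grothendieck duality, combined with the self-duality of $\HEnd(\Tscr)=\Tscr\otimes\Tscr^\vee$ via the trace pairing, then gives
\[
R\Hom_R(\Lambda,R)\cong Rf_{s,*}R\HHom(\HEnd(\Tscr),\Oscr_{X^{ss,\chi}/G})\cong Rf_{s,*}\HEnd(\Tscr)\cong\Lambda.
\]
In particular $\Ext^{>0}_R(\Lambda,R)=0$, which over the Gorenstein ring $R$ forces $\Lambda$ to be maximal Cohen-Macaulay, completing the verification. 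The main obstacle I foresee is making Grothendieck duality precise in this Deligne-Mumford stack setting with a non-equidimensional map $f_s$ (whose fibers have dimension $0$ or $1$), so that $f_s^!\Oscr=\Oscr$ holds without a degree shift; this should be formal but needs to be stated carefully.
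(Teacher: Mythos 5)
Your proposal follows essentially the same route as the paper: reduce via Corollary \ref{cor:dim1} to showing $\Lambda$ is Cohen-Macaulay, then apply Grothendieck duality along $f_s$ together with the crepancy/triviality of $\omega_{X^{ss,\chi}/G}$ from Lemma \ref{lem:crep}, the self-duality of $\uEnd(\Tscr)$, and the $\Ext$-vanishing from the tilting condition to get $R\Hom_R(\Lambda,\omega_R)\cong\Lambda$ concentrated in degree zero. The paper's chain $\RHom_{X\quot G}(Rf_{s,\ast}\Ascr,\omega_{X\quot G})=f_{s,\ast}\Ascr$ is the same computation (using $\omega_{X\quot G}\cong\Oscr_{X\quot G}$), and the technical subtlety you flag about $f_s^!$ for the DM stack is exactly what the paper handles by appealing to the argument of \cite[Lemma 3.2.9]{VdB04a}, noting that $f_s$ is birational (an isomorphism on $X^{\mathbf s}/G$) so no shift occurs.
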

\begin{proof}
Let
$\Ascr=\uEnd_{X^{ss,\chi}/G}(\Tscr)$ where $\Tscr$ is as in Theorem \ref{th:dim1}.
Then $\Ascr$ is a sheaf of algebras on $X^{ss,\chi}/G$. 
By Corollary \ref{cor:dim1} we have to show that $\Lambda=Rf_{s,\ast} \Ascr$
is Cohen-Macaulay. 
Using Lemma \ref{lem:crep} below we have 
by  the same argument as \cite[Lemma 3.2.9]{VdB04a} noting that $f_s=\theta\circ \pi_s$ is proper (since $\pi_s$, $\theta$ are proper by  \cite[Exercise 11.E]{Olsson}, \cite[Proposition 14.1.12]{CoxLittleSchenck}, resp.) and referring to \cite[Corollary 2.10]{Nironi} for the first equality
\begin{align*}
\RHom_{X\quot G}(Rf_{s,\ast} \Ascr,\omega_{X\quot G})&=\RHom_{X^{ss,\chi}/G}(\Ascr,f_s^!\omega_{X\quot G})\\
&=\RHom_{X^{ss,\chi}/G}(\Ascr,\omega_{X^{ss,\chi}/G})\\
&=\RHom_{X^{ss,\chi}/G}(\Ascr,\Oscr_{X^{ss,\chi}/G})\\
&=Rf_{s,\ast}\Ascr^\vee\\
&= Rf_{s,\ast}\Ascr\\
&=f_{s,\ast}\Ascr
\end{align*}
This finishes the proof.
\end{proof}

We have used the following lemma.

\begin{lemma}\label{lem:crep}
Let $X,G,\chi$ be as in Theorem \ref{th:dim1}. 
Assume in addition that
$X=W^\vee$ where $W$ is a generic unimodular $G$-representation. 
The map $f_s=\theta\circ\pi_s$ is crepant and  $\omega_{X^{ss,\chi}/G}\cong \Oscr_{X^{ss,\chi}/G}$. 
\end{lemma}

\begin{proof}
The hypothesis imply that $\omega_{X\quot G}$ is invertible and moreover $\omega_{X\quot G}\cong \Oscr_{X\quot G}$ by \cite[Satz 2]{Knop2} because of the unimodularity. 
 A Deligne-Mumford stack is \'etale locally a quotient stack for a finite group and in particular $\omega_{X^{ss,\chi}/G}$ is a reflexive sheaf (it is in fact invertible but already reflexivity suffices our purposes). 
We claim $f_s^\ast\omega_{X\quot G}=\omega_{X^{ss,\chi}/G}$ and hence in particular $\omega_{X^{ss,\chi}/G}\cong \Oscr_{X^{ss,\chi}/G}$. 
This follows from the fact
both $\omega_{X\quot G}$ and $\omega_{X^{ss,\chi}/G}$ are reflexive 
and $f_s$ is the identity on $X^{\bf s}/G\cong X^{\bf s}\quot G$, using that  the complement of $X^{\bf s}$ in $X$ is of codimension $\geq 2$ by the genericity assumption.
\end{proof}

\begin{remark}
The assumption that $W$ is  generic simplifies the proof of the previous lemma but it is in fact superfluous. 
This is a consequence of the theory of toric DM stacks~\cite{BorisovHorja}. 
See e.g. \cite[Lemma A.2]{SVdB5}.
\end{remark}

\section{Proof of Theorem \ref{th:dim1}}\label{sec:proof}
 We refer to \cite[Definition 3.3.1]{SVdB3} for the definition of a good quotient. 
Assume $Y$ is such that a good quotient $Y\quot G$ exists (in particular $G$ is reductive).
For
an open $U\subset Y\quot G$ we write $\tilde{U}=U\times_{Y\quot G} Y\subset Y$. 

\begin{definition}\label{def:generator}
Let $\Yscr$ be a stack. 
We say that a derived category $D(\Yscr)$ is  generated by an object $E\in D(\Yscr)$ if $E^\perp=0$.  
We say that $D(\Yscr)$ for $\Yscr=Y/G$ such that a good quotient $Y\quot G$ exists is \emph{locally generated}
by a perfect object $E$ if $D(\tilde{U}/G)$ is generated by $E{|}\tilde{U}$ for every affine open
$U\subset Y\quot G$, i.e. $(E{|}\tilde{U})^\perp=0$.
\end{definition}

\begin{remark}\label{rem:gen}
From the fact that $G$-equivariant complexes on $\tilde{U}$ can be extended to complexes on $Y$ (for example by pushforward), it follows that $E$ being a  local generator is 
equivalent to the statement that $R\uHom_{Y/G}(E,\Fscr)=0$ implies $\Fscr=0$.
\end{remark}

The following is a variant on \cite[Lemma 3.5.4]{SVdB3}. It can be deduced from a more general result (see \cite[Lemma 1.3, Theorem 5.7]{OS}). However it seems useful to give a direct proof in our simple setting. 

\begin{lemma} \label{lem:local}
Let $G$ be a reductive group acting on an algebraic variety $Y$ such that
a good quotient
$\pi:Y\rightarrow
Y\quot G$ exists and such that $Y/G$ is a Deligne-Mumford stack. Then $D(Y/G)$ is {locally generated} by $V\otimes \Oscr_Y$
for a single finite dimensional representation $V$ of $G$.
\end{lemma}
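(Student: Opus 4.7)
The plan is to use Luna's étale slice theorem to reduce generation of $D(X/G)$ near a closed orbit to a generation statement for a finite group acting on an affine slice, and then exploit finiteness of the Luna stratification by stabilizer type to produce a single representation $V$ that works everywhere.

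First, since $X/G$ is Deligne--Mumford, every closed orbit $Gx \subset X$ has finite stabilizer $H := G_x$, and by Luna's slice theorem it admits a $G$-equivariant étale neighborhood of the form $G \times^{H} S$ with $S$ an affine $H$-scheme. In this local model $(G \times^{H} S)/G \simeq S/H$, and because $S$ is affine and $H$ is finite, $D(S/H)$ is generated by the sheaves $\{\chi \otimes \Oscr_S\}_{\chi \in \hat{H}}$, equivalently by $kH \otimes \Oscr_S$ with $kH$ the regular representation of $H$. Hence it suffices to produce a single finite-dimensional $G$-representation $V$ with the property that $V|_{H}$ contains $kH$ as a direct summand for every stabilizer $H$ of a closed orbit in $X$.

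Second, Luna's stratification theorem gives a finite decomposition of $X\quot G$ into locally closed strata indexed by conjugacy classes $[H_{1}], \ldots, [H_{n}]$ of stabilizers of closed orbits. For each such $H_i$ and each $\chi \in \hat{H}_i$, the induced module $\Ind_{H_i}^{G} \chi = \Gamma(G/H_i,\, G \times^{H_i} \chi)$ is a nonzero rational $G$-module (because $G/H_i$ is affine and the vector bundle is nonzero). Decomposing it as a directed union of finite-dimensional $G$-subrepresentations and applying Frobenius reciprocity $\Hom_G(V',\Ind_{H_i}^{G} \chi) = \Hom_{H_i}(V'|_{H_i}, \chi)$, one obtains a finite-dimensional $G$-subrepresentation $V_{H_i,\chi}$ with $\chi$ a direct summand of $V_{H_i,\chi}|_{H_i}$. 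Setting $V := \bigoplus_{i} \bigoplus_{\chi \in \hat{H}_{i}} V_{H_i, \chi}$ yields the required representation.

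The main obstacle is the passage from Luna slices back to an arbitrary affine open $U \subset X\quot G$: one has to verify that the local generation statement on slices propagates to $\tilde{U}/G$. The key point is that $\tilde U$ is covered, étale locally on $U$, by (induced) Luna slices through closed orbits, and generation in $D(\tilde U/G)$ can be checked after such an étale cover, reducing the question to the local analysis above. Compared with the more general framework of \cite{OS}, this étale descent step is the only nontrivial input; the existence of $V$ itself is a formal consequence of Luna's stratification finiteness together with reductivity of $G$.
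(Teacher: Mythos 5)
Your proof is correct and close in spirit to the paper's: both reduce to a Luna slice $G\times^{H}S$ at a closed orbit with finite stabilizer $H$, and both exploit that $kH\otimes\Oscr_S$ generates $D(S/H)$. The differences are worth noting. The paper first reformulates generation sheaf-theoretically -- it suffices to find $V$ with $\pi_{s,\ast}\uHom_{X/G}(V\otimes\Oscr_X,\Fscr)=0\Rightarrow\Fscr=0$ -- and observes that both hypothesis and conclusion are \'etale-local on $X\quot G$ while ``any bigger $V$ also works''. Together with quasi-compactness this lets one assemble a global $V$ from finitely many local ones without invoking Luna's stratification finiteness explicitly. On the slice, the paper then notes that $kH\otimes\Oscr_S$ corresponds to a $G$-equivariant vector bundle $\Escr$ on the affine scheme $G\times^{H}S$, and simply writes $\Escr$ as a quotient of $V\otimes\Oscr_{G\times^H S}$ (possible since the slice is affine and $G$ reductive). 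A $V$-quotient being a local generator already forces $V\otimes\Oscr$ to be one, so this sidesteps the Frobenius reciprocity and induced-module analysis that you carry out. Your route via the finite Luna stratification plus Frobenius reciprocity is a valid and somewhat more explicit alternative; it buys you a concrete recipe for $V$ in terms of the stabilizer types, at the cost of checking restriction properties.

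Two small points. First, your claim that $V|_{H}$ contains the regular representation $kH$ as a direct summand is slightly stronger than what your construction yields (one copy of each irreducible, not multiplicity $\dim\chi$); fortunately all that is needed is that every irreducible of $H$ occurs at least once. Second, your final paragraph identifies the \'etale-descent step as the nontrivial input but leaves it somewhat implicit; the paper's sheaf-theoretic reformulation above is precisely what makes this descent transparent, and you should say that the vanishing of $\pi_{s,\ast}\uHom(V\otimes\Oscr_X,\Fscr)$ and of $\Fscr$ are both \'etale-local conditions, so the generation criterion can indeed be checked on slices.
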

\begin{proof} 
By Remark \ref{rem:gen},  we need to find $V$ such that $\pi_{s,\ast}R\uHom_{Y/G}(V\otimes \Oscr_Y,\Fscr)=0$ 
implies $\Fscr=0$, where $\pi_s:Y/G\r Y\quot G$ is the morphism of stacks associated to
$\pi$. Since $\pi_{s,\ast}$ is exact, as $\pi$ is a good quotient, and $V\otimes \Oscr_Y$ is a vector bundle we have $H^*(\pi_{s,\ast}R\uHom_{Y/G}(V\otimes \Oscr_Y,\Fscr))=\pi_{s,\ast}\uHom_{Y/G}(V\otimes \Oscr_Y,H^*(\Fscr))$. In other words, it is sufficient to prove 
that $\pi_{s,\ast}\uHom_{Y/G}(V\otimes \Oscr_Y,\Fscr)=0$ 
implies $\Fscr=0$
  for $\Fscr\in \Qch(Y/G)$. 

If a certain $V$ works then any representation containing  $V$ works
as well. Hence we claim that the existence of $V$ is a local property for the \'etale topology on $Y\quot G$. 
Let $(U_i\to Y\quot G)_i$ be an \'etale covering of $Y\quot G$. 
Let $\tilde{U}_i=Y\times_{Y\quot G} U_i$. 
Since $G$ is reductive (and $\pi$ is a good quotient) one can see that $U_i=\tilde{U}_i\quot G$.\footnote{It is easy to see that a good quotient (in the sense of \cite[Definition 3.3.1]{SVdB4}) is compatible with arbitrary base extension; i.e. if $Y\to X$ is a good quotient and $Z\to X$ is arbitrary, then $Y\times_X Z\to Z$ is also a good quotient. To see this note that $Y\to X$ is built by gluing morphisms $\Spec A\to \Spec A^G$ and this allows us to reduce to the affine case. Let $Y=\Spec A$, $X=\Spec A^G,$ $Z=\Spec B$. Then the dual statement $B=(A\otimes_{A^G} B)^G$ holds since the inclusion $A^G\hookrightarrow A$ is split by the Reynolds operator.}
 We denote $\pi^i_{s}:\tilde{U}_i/G \to U_i\quot G$. 
 Let us assume that for every $i$ there exists $V_i$ such that 
$\pi^i_{s,*}\uHom_{\tilde{U}_i/G}(V_i\otimes \Oscr_{\tilde{U}_i},\Fscr)=0$ implies $\Fscr=0$ for $\Fscr\in \Qch(\tilde{U}_i/G)$. 
As $Y\quot G$ is quasi-compact (and as an \'etale map is open) we only need a finite number of $U_i$ such that $(U_i\to Y\quot G)_{i=1}^n$ is an \'etale covering.  Let $V=\oplus_{i=1}^n V_i$. Assume that $\Hscr=\pi_{s,*}\uHom_{Y/G}(V\otimes \Oscr_Y,\Fscr)=0$. We need to prove that $\Fscr=0$. 
Let us write $\Gscr_i$ for the pullback of $\Gscr\in \Qch(Y/G)$ to $\tilde{U}_i/G$.
The restriction $\Hscr_i$ to $\tilde{U_i}/G$ is $0$. 
Moreover, flatness of \'etale morphisms implies that $\Hscr_i=\pi^i_{s,*}\uHom_{\tilde{U}_i/G}(V_i\otimes \Oscr_{\tilde{U}_i},\Fscr_i)=0$. 
Thus, $\Fscr_i=0$ by our assumption, and hence $\Fscr=0$.

We may therefore assume that
$Y$ is affine, and furthermore it suffices to show that $\Fscr$ is zero in a neighbourhood of any closed orbit by \cite[Lemma 4.4.3]{SVdB3}. 
Invoking the Luna slice theorem we may assume that  $\pi$
is of the form $G\times^H S\r (G\times^H S)\quot G\cong S\quot H$ where $S$ is an \'etale slice at $y\in Y$ with
closed orbit and $H=\Stab(y)$. Since $Y/G$ is a Deligne-Mumford stack, $H$ is finite. 
Let $kH$ be the regular $H$-representation. Then
$\uHom_{S/H}(kH\otimes \Oscr_S,\Fscr)=0$ implies $\Fscr=0$. Since $S/H\cong (G\times^H S)/G$, $kH\otimes_k \Oscr_S$ corresponds to a $G$-equivariant vector bundle $\Escr$ on $G\times^H S$. It now
suffices, using the reduction to $Y=G\times^H S$ and $Y/G\cong S/H$, to write $\Escr$ as a quotient of $V\otimes \Oscr_{G\times^H S}$ for some finite
dimensional $G$-representation $V$.
\end{proof}
\begin{lemma} \label{lem:multiple} Let $G$ be a reductive group acting on an algebraic variety~$Y$ which is projective over an affine variety and let $\Mscr$ be an ample $G$-equivariant line bundle on $Y$. Let $Y^{ss}\subset Y$ be the semi-stable
locus corresponding to the linearization given by $\Mscr$ and let $\pi:Y^{ss}\r Y^{ss}\quot G$ be the (good)\footnote{See e.g. \cite[\S3.4]{SVdB3}.} quotient map. Then up to replacing $\Mscr$ by a strictly positive multiple we may assume that
$(\Mscr|_{Y^{ss}})^G$ is an ample line bundle on $Y^{ss}\quot G$ generated by global sections such that moreover $\pi^\ast ((\Mscr|_{Y^{ss}})^G)\cong\Mscr|_{Y^{ss}}$.
\end{lemma}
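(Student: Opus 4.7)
The plan is to use standard GIT machinery: first descend a suitable power of $\Mscr$ to $X^{ss}\quot G$ via Kempf's descent criterion, then recognise the descended bundle as the tautological ample line bundle on the $\Proj$ presentation of the GIT quotient, and finally pass to yet a further power to ensure global generation.

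For the descent step, Kempf's descent lemma says that a $G$-equivariant line bundle on $X^{ss}$ descends to $X^{ss}\quot G$ if and only if for every $x\in X^{ss}$ with closed $G$-orbit the (reductive) stabilizer $G_x$ acts trivially on the fiber at $x$. Now $G_x$ acts on $\Mscr(x)$ through a character $\eta\in X(G_x)$. For every one-parameter subgroup $\lambda$ of $G_x$ one has $\lambda(t)x=x$, so applying the Hilbert--Mumford numerical criterion to both $\lambda$ and $\lambda^{-1}$ forces the weight $\langle\lambda,\eta\rangle$ to be simultaneously $\ge 0$ and $\le 0$, hence zero. It follows that $\eta$ is trivial on a maximal torus of $G_x$ and therefore on the connected component $G_x^\circ$, so $\eta$ has finite order. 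Since $X$ is of finite type, there are only finitely many conjugacy classes of stabilizers at closed orbits in $X^{ss}$ (by Luna stratification), so the orders of the corresponding characters admit a common multiple $N$; for this $N$ the bundle $\Mscr^{\otimes N}|_{X^{ss}}$ descends to a line bundle $\Lscr$ on $X^{ss}\quot G$ with $\pi^\ast\Lscr\cong\Mscr^{\otimes N}|_{X^{ss}}$.

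For ampleness and global generation, one recalls that by construction $X^{ss}\quot G=\Proj\bigoplus_{n\ge 0}\Gamma(X,\Mscr^{\otimes nN})^G$ as a projective scheme over the affine base $\Spec\Gamma(X,\Oscr_X)^G$, and under this presentation $\Lscr$ is identified with the tautological ample line bundle $\Oscr(1)$. Since $X^{ss}\quot G$ is projective over an affine scheme, some further power $\Lscr^{\otimes m}$ is generated by its global sections; replacing $\Mscr$ by $\Mscr^{\otimes mN}$ then yields all three properties of the lemma simultaneously.

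The main obstacle is the descent step, namely controlling the stabilizer characters uniformly so as to find a single power of $\Mscr$ whose restriction to $X^{ss}$ descends to the quotient. Once this is secured, ampleness and global generation are immediate from the $\Proj$ presentation of the GIT quotient over the affine invariant ring.
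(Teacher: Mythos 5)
Your proof is correct, but it takes a genuinely different route from the paper's.  The paper's argument is purely algebraic: set $\Gamma_\ast(X)=\bigoplus_{n\ge 0}\Gamma(X,\Mscr^{\otimes n})$, note $X^{ss}\quot G=\Proj\Gamma_\ast(X)^G$, pass to a Veronese power $N$ for which both $\Gamma_\ast(X)$ and $\Gamma_\ast(X)^G$ have their $N$-th Veronese generated in degree one, and refer to Brion's notes for the (routine) verification that $\Oscr(1)$ on the quotient then descends correctly, is ample, and is globally generated.  In that approach the descent statement $\pi^\ast((\Mscr|X^{ss})^G)=\Mscr$ comes out ``for free'' from the fact that $\Mscr^{\otimes N}$ is trivialized on the basic open $X_s$ by a $G$-invariant section $s$, so there is no need to inspect stabilizers at all.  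You instead prove descent via Kempf's descent lemma: you show that the stabilizer character on the fiber at any closed-orbit semistable point is killed on the identity component (by the Hilbert--Mumford criterion applied to $\lambda$ and $\lambda^{-1}$), hence has finite order, and you obtain a uniform bound on these orders from the finiteness of Luna strata.  This is a more conceptual, group-theoretic explanation of why a power descends, at the cost of invoking Luna's slice theorem.  The only point that deserves tightening is the sentence identifying $\Lscr$ with $\Oscr(1)$ on $\Proj\bigoplus_n\Gamma(X,\Mscr^{\otimes nN})^G$: without first passing to a Veronese of $\Gamma_\ast(X)^G$ generated in degree one, $\Oscr(1)$ need not be invertible, so the identification is not literal.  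It is cleaner to argue directly that $\Lscr$ is ample because $X^{ss}\quot G$ is covered by the affine opens $D_+(s)$ for $G$-invariant $s\in\Gamma(X,\Mscr^{\otimes nN})^G$, on each of which $\Lscr^{\otimes n}$ is trivialized by $s$; ampleness plus projectivity over the affine base then yields global generation after a further power, exactly as you conclude.
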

\begin{proof} Put
\[
\Gamma_\ast(Y)=\bigoplus_{n\ge 0} \Gamma(Y,\Mscr^{\otimes n}).
\]
Then $Y^{ss}\quot G=\Proj \Gamma_\ast(Y)^G$. Since $\Gamma_\ast(Y)$ and $\Gamma_\ast(Y)^G$ are finitely generated, there is an $N$ such that the $N$'th  Veronese subalgebras of $\Gamma_\ast(Y)$ 
and $\Gamma_\ast(Y)^G$ are both generated in degree one. We then replace $\Mscr$ by $\Mscr^{\otimes N}$. 
\end{proof}
\begin{lemma} \label{lem:compact}  Let $G$ be a reductive group acting on an algebraic variety~$Y$
which is
  projective over an affine variety and let $\Mscr$ be an ample
  $G$-equivariant line bundle on $Y$. Let $Y^{ss}\subset Y$ be the
  semi-stable locus corresponding to the linearization given by
  $\Mscr$ and let $\pi:Y^{ss}\r Y^{ss}\quot G$, $\pi_s:Y^{ss}/G\r Y^{ss}\quot G$ 
be the associated quotient maps. We assume that $Y^{ss}/G$ is a Deligne-Mumford stack.

In addition we assume that we have replaced $\Mscr$ by a strictly positive multiple such that $\Lscr:=(\Mscr\mid Y^{ss})^G\in \Pic(Y^{ss}\quot G)$ has the properties exhibited in Lemma \ref{lem:multiple}. Put $Z=\Spec \Gamma(Y,\Oscr_Y)$.
Let $d$ be the maximum of
the dimension of the fibers of $Y^{ss}\quot G\r Z\quot G$. Let $V$ be a finite dimensional representation of $G$ such that $V\otimes \Oscr_{Y^{ss}}$ is a local generator for $D(Y^{ss}/G)$
as in Lemma \ref{lem:local}. Put $V=\bigoplus_{i=1}^n V_i$ with $V_i$ irreducible and fix $m_i\in \ZZ$ for $i=1,\dots,n$ and $l\ge 1$.
Then 
\[
\bigoplus_{j=0}^d \bigoplus_{i=1}^{n} V_i\otimes \pi_s^{\ast}(\Lscr)^{\otimes lj+m_i}
=\bigoplus_{j=0}^d \bigoplus_{i=1}^{n} V_i\otimes \Mscr^{\otimes lj+m_i}\biggr|_{Y^{ss}}
\]
is a compact generator for $D(Y^{ss}/G)$.
\end{lemma}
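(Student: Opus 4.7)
The plan is to establish compactness and generation separately. Compactness is immediate: each summand $V_i\otimes\pi_s^{\ast}\Lscr^{\otimes(lj+m_i)}$ is a vector bundle on the smooth Deligne--Mumford stack $X^{ss}/G$ and so is compact in $D(X^{ss}/G)$. For generation, assume $\Fscr\in D(X^{ss}/G)$ is right-orthogonal to every summand; the goal is $\Fscr=0$. Set $\Hscr_i:=R\pi_{s,\ast}(V_i^{\vee}\otimes\Fscr)\in D(X^{ss}\quot G)$. Applying the projection formula along $\pi_s$ and factoring global sections via the projective $p\colon X^{ss}\quot G\to Y\quot G$ onto the affine $Y\quot G$ (so that $R\Gamma(X^{ss}\quot G,-)$ is conservative on quasicoherent complexes), the hypothesis becomes
\[
Rp_{\ast}\bigl(\Lscr^{-lj-m_i}\otimes\Hscr_i\bigr)=0\qquad (j=0,\dots,d;\ i=1,\dots,n).
\]

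The crux is now to deduce $\Hscr_i=0$. Write $\Lscr':=\Lscr^{\otimes l}$, still ample and globally generated with $p$-fibres of dimension $\le d$. Global generation defines a $Y\quot G$-morphism $\phi\colon X^{ss}\quot G\to\PP^N_{Y\quot G}$ with $\phi^{\ast}\Oscr(1)=\Lscr'$; since $\Lscr'$ is $p$-ample, $\phi$ contracts no positive-dimensional subvariety of a $p$-fibre, so $\phi$ is quasi-finite, hence finite. By the projection formula and exactness of $\phi_{\ast}$ the hypothesis becomes $R\pi_{\ast}(\Oscr(-j)\otimes\Kscr_i)=0$ for $j=0,\dots,d$, where $\pi\colon\PP^N_{Y\quot G}\to Y\quot G$ and $\Kscr_i:=\phi_{\ast}(\Lscr^{-m_i}\otimes\Hscr_i)$ has $\pi$-fibrewise support of dimension $\le d$. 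A Beilinson-type induction on $d$ then gives $\Kscr_i=0$: pick a relative hyperplane $H\subset\PP^N_{Y\quot G}$ meeting $\supp\Kscr_i$ properly on each fibre (Bertini for the generic fibre combined with spreading out), use the Koszul triangle $\Oscr(-1)\to\Oscr\to\Oscr_H$ to obtain by induction $\Kscr_i\Lotimes\Oscr_H=0$ on $\PP^{N-1}_{Y\quot G}$, whence $\Kscr_i\cong\Kscr_i(-1)$; this is impossible for a nonzero bounded coherent complex by Hilbert-polynomial considerations, and the base case $d=0$ is immediate since $\pi$ is then finite on $\supp\Kscr_i$. A standard truncation argument reduces unbounded $\Kscr_i\in D(\PP^N_{Y\quot G})$ to the bounded coherent case, and finiteness of $\phi$ together with invertibility of $\Lscr^{-m_i}$ then give $\Hscr_i=0$.

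Finally, from $R\pi_{s,\ast}(V_i^{\vee}\otimes\Fscr)=0$ for every $i$, restricting to any affine open $U\subset X^{ss}\quot G$ yields $R\Gamma(\tilde U/G,\,V_i^{\vee}\otimes\Fscr|_{\tilde U/G})=0$, i.e.\ $\Fscr|_{\tilde U/G}$ is right-orthogonal to $V\otimes\Oscr_{\tilde U}$. By Lemma~\ref{lem:local} the latter generates $D(\tilde U/G)$, so $\Fscr|_{\tilde U/G}=0$; varying $U$ gives $\Fscr=0$. The main obstacle is the Beilinson-type induction in the central step: it combines a relative Bertini argument over the possibly singular affine base $Y\quot G$ with a careful truncation reduction to bounded coherent complexes, and requires that the morphism $\phi$ defined by the globally generated $p$-ample $\Lscr^{\otimes l}$ be genuinely finite rather than merely a closed immersion.
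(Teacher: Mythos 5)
Your overall architecture matches the paper's: compactness follows because each summand is a perfect complex; for generation you twist away the $m_i$'s, apply the projection formula along $\pi_s$ and along $p\colon X^{ss}\quot G\to Y\quot G$, and reduce to showing that vanishing of $Rp_\ast(\Lscr^{\otimes -lj}\otimes -)$ for $j=0,\dots,d$ forces $0$. The paper's proof stops exactly there and invokes \cite[Lemma~3.2.2]{VdB04a} as a black box, together with the observation that since $\pi_s^\ast(\Lscr)$ is invertible, tensoring by $\pi_s^\ast(\Lscr)^{\otimes m_i}$ does not affect the ``local generator'' property of $V_i\otimes\Oscr_{X^{ss}}$. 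You instead attempt to re-derive the cited lemma via the finite morphism $\phi$ to $\PP^N_{Y\quot G}$ and a Koszul/Bertini induction on $d$.

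The re-derivation, as written, has a gap at the hyperplane step. You want a single relative hyperplane $H$ ``meeting $\supp\Kscr_i$ properly on \emph{each} fibre'', and you justify this by ``Bertini for the generic fibre combined with spreading out.'' Spreading out a generic-fibre statement only gives the desired transversality over a dense open of $Y\quot G$, not over every point; in general no single $H$ will avoid the top-dimensional components of $(\supp\Kscr_i)_z$ for all $z$ simultaneously (e.g.\ a family of $d$-planes sweeping out all of $\PP^N$). The fix is to note that the conclusion $\Kscr_i=0$ is local on the affine base: near a fixed $z_0$ one may choose $H$ transverse to $(\supp\Kscr_i)_{z_0}$, and upper semicontinuity of fibre dimension of $\supp\Kscr_i\cap H$ makes the inductive hypothesis available on a neighbourhood of $z_0$. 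Similarly the sentence ``a standard truncation argument reduces unbounded $\Kscr_i$ to the bounded coherent case'' and the ``Hilbert-polynomial'' contradiction are only asserted; the latter is easier to close by observing that $\Kscr_i\Lotimes\Oscr_H=0$ forces $\supp\Kscr_i$ to lie in the affine open $\PP^N_{Y\quot G}\setminus H$ while also being closed and proper over $Y\quot G$, hence finite, reducing to the base case. In short: your proof is essentially the paper's proof with the citation of \cite[Lemma~3.2.2]{VdB04a} replaced by a sketch of its proof, and that sketch needs the base-localization patch to be correct.
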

\begin{proof} Replacing $\Mscr$ by $\Mscr^{\otimes l}$ we may assume $l=1$.
Put $\Escr=\bigoplus_{i=1}^{n} V_i\otimes \pi_s^{\ast}(\Lscr)^{\otimes m_i}$. Then since $\pi_s^{\ast}(\Lscr)$
is locally free on $Y^{ss}/G$, 
$\Escr$ is a local
generator for $D(Y^{ss}/G)$. We must prove that $\bigoplus_{j=0}^d \Escr\otimes 
\pi_s^{\ast}(\Lscr)^{\otimes j}$ is a generator for $D(Y^{ss}/G)$.

Assume $\Fscr\in D(Y^{ss}/G)$ is such that $\RHom_{Y^{ss}/G}(\bigoplus_{j=0}^d \Escr\otimes 
\pi_s^{\ast}(\Lscr)^{\otimes j},\Fscr)=0$. Then $\RHom_{Y^{ss}\quot G}(\bigoplus_{j=0}^d \Lscr^{\otimes j} ,\pi_{s,\ast} \uRHom_{Y^{ss}/G}(\Escr,\Fscr))=0$. By \cite[Lemma 3.2.2]{VdB04a} this implies
$\pi_{s,\ast} \uRHom_{Y^{ss}/G}(\Escr,\Fscr)=0$. Since $\Escr$ is a local generator this implies $\Fscr=0$.
\end{proof}

\begin{lemma} \label{lem:vanishing}
Let $X,G,\chi$ be as in Theorem \ref{th:dim1}. 
Then $\theta$ is birational and it is true that $R\theta_\ast \Oscr_{X^{ss,\chi}\quot G}=\Oscr_{X\quot G}$. 
Finally $R^i\theta_\ast=0$ for $i>1$.
\end{lemma}
\begin{proof} Both $X^{ss,\chi}\quot G$ and $X\quot G$ contain $X^s\quot G$
as an nonempty hence dense subscheme. So they
are birational.
Both $X^{ss,\chi}\quot G$ and $X\quot G$ are quotients by
  reductive groups and hence they have rational singularities (see \cite[Corollaire]{Boutot}). 
This proves the claim about $R\theta_\ast\Oscr_{X^{ss,\chi}\quot G} $. The last claim follows from
the hypothesis that the fibers of $\theta$ have dimension $\le 1$.
\end{proof}

\begin{lemma} \label{lem:generator}
Let $G,X,\chi$ be as in Theorem \ref{th:dim1}. Then there exist characters $(\chi_u)_{u=1,\ldots,N}$ such that $\Lscr_u=\chi_u\otimes \Oscr_{X^{ss,\chi}}$ for $i=1,\ldots, N$ generate $D(X^{ss,\chi}/G)$
and such that moreover we
have
\begin{equation}
\label{eq:ext_cond1}
\Ext^i_{X^{ss,\chi}/G}(\Lscr_u,\Lscr_u)=0\qquad\text{ for $i>0$}
\end{equation}
\begin{equation}
\label{eq:ext_cond2}
\Ext^i_{X^{ss,\chi}/G}(\Lscr_u,\Lscr_v)=0\qquad\text{ for $i>1$}
\end{equation}
\begin{equation}
\label{eq:ext_cond3}
\Ext^1_{X^{ss,\chi}/G}(\Lscr_u,\Lscr_v)=0\qquad\text{ for $u<v$}
\end{equation}
\end{lemma}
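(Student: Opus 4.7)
The plan is to apply Lemma \ref{lem:compact} to $X$ regarded as projective over the affine variety $Y := X$, with the $G$-equivariant line bundle $\Mscr := \chi \otimes \Oscr_X$. Its GIT semistable locus is exactly $X^{ss,\chi}$, and the hypothesis that the fibers of $\theta$ have dimension $\le 1$ yields the constant $d \le 1$ in Lemma \ref{lem:compact}. Since $G$ is abelian, every irreducible representation is one-dimensional, so the local generator supplied by Lemma \ref{lem:local} has the form $V = \bigoplus_{i=1}^n \chi_i'$ with $\chi_i'$ characters. After replacing $\chi$ by a strictly positive multiple to satisfy Lemma \ref{lem:multiple} (which does not change $X^{ss,\chi}$), Lemma \ref{lem:compact} tells us that for any integers $m_1, \ldots, m_n$ and any $l \ge 1$ the family
\[
\Lscr_{(i,j)} := (\chi_i' + (lj + m_i)\chi) \otimes \Oscr_{X^{ss,\chi}}, \quad (i,j) \in \{1,\ldots,n\} \times \{0,1\},
\]
generates $D(X^{ss,\chi}/G)$; these will be my $\Lscr_u$, for a suitable ordering.

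Every $\Ext$ computation then reduces to cohomology of a line bundle of the form $\gamma \otimes \Oscr_{X^{ss,\chi}/G}$ for a character $\gamma$. Because $X^{ss,\chi}/G$ is Deligne--Mumford in characteristic zero, its coarse moduli morphism $\pi_s$ has exact pushforward on quasi-coherent sheaves (locally it is the invariants functor of a finite group), so $H^p(X^{ss,\chi}/G, \gamma \otimes \Oscr) = H^p(X^{ss,\chi} \quot G, \pi_{s,*}(\gamma \otimes \Oscr))$. Since $X \quot G$ is affine and Lemma \ref{lem:vanishing} gives $R^{\ge 2}\theta_* = 0$ along with $R\theta_*\Oscr = \Oscr$, we obtain \eqref{eq:ext_cond2} for all $p \ge 2$ at once, and \eqref{eq:ext_cond1} falls out of the case $\gamma = 0$.

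For the triangularity condition \eqref{eq:ext_cond3} I would use Serre vanishing. For each ordered pair $(i,i')$ with $i \ne i'$ the coherent sheaf $\Fscr_{i,i'} := \pi_{s,*}((\chi_{i'}' - \chi_i') \otimes \Oscr_{X^{ss,\chi}/G})$ admits, via the ample $\Lscr$ on the projective morphism $\theta$ over the affine base $X \quot G$, an integer $N_{i,i'}$ with $R^1\theta_*(\Fscr_{i,i'} \otimes \Lscr^N) = 0$ for all $N \ge N_{i,i'}$. Setting $N_0 := \max_{i \ne i'} N_{i,i'}$, I take $m_i := (i-1) N_0$ and $l \ge N_0$, and order the $\Lscr_{(i,j)}$ lexicographically: first by $j$, then by $i$. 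For $u < v$ the relevant twist of $\Fscr_{i,i'}$ is by $\Lscr^{m_{i'} - m_i + l(j_v - j_u)}$, and the exponent is always $\ge N_0$ by construction, so Serre vanishing delivers the desired $\Ext^1$ vanishing.

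The main obstacle is the bookkeeping needed to ensure that a single choice of $(m_i)$ and $l$ simultaneously handles every pair $(u,v)$ in the correct direction; this is tractable because only finitely many pairs $(i,i')$ arise and each Serre-vanishing bound $N_{i,i'}$ is finite. A subsidiary technical input is the clean transfer of cohomology from the stack $X^{ss,\chi}/G$ down to the coarse moduli $X^{ss,\chi} \quot G$ via $\pi_s$, which relies on the characteristic-zero Deligne--Mumford hypothesis on $X^{ss,\chi}/G$ supplied by Theorem \ref{th:dim1}.
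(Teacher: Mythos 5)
Your approach is essentially the paper's: apply Lemmas \ref{lem:multiple} and \ref{lem:compact} with $\Mscr=\chi\otimes\Oscr_X$ and $d=1$, identify irreducibles of an abelian $G$ with characters, derive \eqref{eq:ext_cond1} and \eqref{eq:ext_cond2} from Lemma \ref{lem:vanishing} via exactness of $\pi_{s,\ast}$ and affineness of $X\quot G$, and obtain \eqref{eq:ext_cond3} from Serre vanishing for the ample $\Lscr$ along the projective morphism $\theta$ over the affine base. The paper uses the same ingredients at every step.

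That said, your concrete bookkeeping for \eqref{eq:ext_cond3} does not close. With the ordering ``first by $j$, then by $i$'' and the choice $m_i=(i-1)N_0$, $l\ge N_0$, the pair $u=(n,0)<v=(1,1)$ has exponent $m_1+l-m_n=l-(n-1)N_0$, which is negative once $n\ge 3$, so Serre vanishing is not applicable there; you need $l\ge nN_0$ (or, as the paper does, order first by $i$ then by $j$ and require $m_{i+1}\gg m_i+l$). You should also define $N_{i,i}$: for $u=(i,0)<v=(i,1)$ the relevant sheaf is $\Fscr_{i,i}=\pi_{s,\ast}\Oscr=\Oscr_{X^{ss,\chi}\quot G}$, and Lemma \ref{lem:vanishing} only gives $R^1\theta_\ast\Oscr=0$ without the twist, so the twisted vanishing $R^1\theta_\ast(\Oscr\otimes\Lscr^{\otimes l})=0$ still requires a Serre bound for this diagonal case. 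Both points are easily repaired and do not change the strategy.
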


\begin{proof} 
According to Lemma \ref{lem:compact} (with $Y=X$, $\Mscr=\chi\otimes \Oscr_{X^{ss,\chi}}$) after replacing $\chi$ by some strict positive multiple there exist $\mu_i\in X(G)$ (corresponding to the character of $V_i=\mu_i\otimes \Oscr_{X^{ss,\chi}}$ in Lemma \ref{lem:compact}) such that 
for any collection of $m_i\in \ZZ$ and for any $\ell\geq 1$ the object 
\[
\bigoplus_{j=0}^1 \bigoplus_{i=1}^{n} \mu_i\otimes \chi^{lj+m_i} \otimes \Oscr_{X^{ss,\chi}}
\]
is a compact generator of $D(X^{ss,\chi}/G)$.  
We put $\chi_1=\mu_1\otimes \chi^{m_1}$, $\chi_2=\mu_1\otimes \chi^{l+m_1}$, $\chi_3=\mu_2\otimes \chi^{m_2}$, $\dots$, $\chi_{2n}=\mu_n\otimes \chi^{l+m_n}$, $N=2n$.
Then  \eqref{eq:ext_cond1}, \eqref{eq:ext_cond2} follow directly from Lemma \ref{lem:vanishing} (because $\pi_{s,\ast}$ is exact since $\pi$ is a good quotient (see Lemma \ref{lem:multiple}) and $X\quot G$ is affine).

To make \eqref{eq:ext_cond3} true we choose $l,(m_i)_i$ in such a way that
\[
m_1\ll l+m_1\ll m_2\ll m_2+l\ll m_3\ll \cdots
\]
Then in \eqref{eq:ext_cond3} we have $\Lscr_u=\mu_i\otimes \chi^{a}\otimes \Oscr_{X^{ss}}$, $\Lscr_v=\mu_j\otimes\chi^{b}\otimes \Oscr_{X^{ss,\chi}}$
with $a\ll b$. Put $\Lscr=\pi_{s,\ast}(\chi\otimes \Oscr_{X^{ss,\chi}})$. By our choice of $\chi$, $\Lscr$ is ample on $X^{ss,\chi}\quot G$
 and $\pi^{s,\ast}\Lscr=\chi\otimes \Oscr_{X^{ss,\chi}}$ by Lemma \ref{lem:multiple}. Using the projection formula we have
\begin{align*}
\RHom_{X^{ss,\chi}/G}(\Lscr_u,\Lscr_v)&=R\Gamma(X^{ss,\chi}\quot G,\Lscr^{\otimes b-a}\otimes \pi_{s,\ast} (\mu_2\otimes \mu^{-1}_1\otimes \Oscr_{X^{ss,\chi}}))\\
&=\Gamma(X^{ss,\chi}\quot G,\Lscr^{\otimes b-a}\otimes \pi_{s,\ast} (\mu_2\otimes \mu^{-1}_1\otimes \Oscr_{X^{ss,\chi}}))
\end{align*}
where in the second line we use that $\Lscr$ is ample and $b-a\gg 0$.
\end{proof}
\begin{proof}[Proof of Theorem \ref{th:dim1}]  If $E,F$  are objects in an abelian category $\Ascr$ such that the Yoneda extension  
$\Ext^1_{\Ascr}(E,F)$  is a finitely generated right $\Hom_\Ascr(E,E)$-module
with generators $c_1,\ldots,c_n$
then we define the corresponding semi-universal extension of $E$ and $F$ to be 
the middle term of the extension
\[
0\r F\r \bar{F} \r E^{\oplus n}\r 0
\]
corresponding to $(c_i)_i$.

Let $(\Lscr_u)_{u=1,\ldots,N}$ be as in Lemma \ref{lem:generator}. 
Using the conditions (\ref{eq:ext_cond1},\ref{eq:ext_cond2},\ref{eq:ext_cond3}) as in Lemma \ref{lem:generator}
we may construct the object $\Tscr$ by taking successive semi-universal extensions among the $(\Lscr_u)_u$. See \cite[Lemma 3.1]{HillePerling1} for details. In loc.\ cit.\ universal extensions are considered but the argument also works with semi-universal extensions. 
\end{proof}

\section{Combinatorial interpretation}\label{combi}
We let $X,G,\chi$
be as in Theorem \ref{th:dim1}, without a priori assuming that 
 the fibers of $\theta: X^{ss,\chi}\quot G\r X\quot G$ have dimension
$\le 1$.
\begin{proposition}
\label{prop:fibers}
Assume
$X=W^\vee$ for a $G$-representation $W$.  Then if
\begin{equation}
\label{eq:comb}
\dim \hbox{{$X^u$}}-\dim G\le 1
\end{equation}
 the fibers of $\theta$ have dimension $\le 1$.
\end{proposition}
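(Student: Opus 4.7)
The plan is to exploit the natural $\CC^\ast$-action by scaling on the representation $X=W^\vee$ in order to reduce the fiber bound to the single ``central'' fiber $\theta^{-1}(\bar 0)$, where $\bar 0\in X\quot G$ denotes the image of the origin $0\in X$.

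First, since $G$ acts linearly on $X$, scalar multiplication commutes with $G$ and therefore descends to compatible $\CC^\ast$-actions on $X$, $X^{ss,\chi}$, $X\quot G$ and $X^{ss,\chi}\quot G$; in particular $\theta$ is $\CC^\ast$-equivariant. Moreover $\Sym(W)^G$ is a nonnegatively graded $k$-algebra with $k$ in degree zero, so $X\quot G$ is an affine cone and $\bar 0\in \overline{\CC^\ast\bar x}$ for every $\bar x\in X\quot G$.

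Second, by upper semi-continuity of fiber dimension applied to $\theta$, the function $\bar x\mapsto \dim\theta^{-1}(\bar x)$ is constant on $\CC^\ast$-orbits (by equivariance) and its upper level sets are closed; these two facts together force $\bar 0$ to belong to the upper level set containing any given $\bar x$, yielding
\[
\dim\theta^{-1}(\bar x)\le \dim\theta^{-1}(\bar 0)\qquad\text{for every }\bar x\in X\quot G.
\]
It then suffices to bound the right-hand side. The set-theoretic fiber of $\gamma:X\to X\quot G$ over $\bar 0$ is precisely the nullcone $X^u=\{x\in X:0\in\overline{Gx}\}$, so $\theta^{-1}(\bar 0)$ is the image of $X^{ss,\chi}\cap X^u$ under the GIT quotient map $X^{ss,\chi}\to X^{ss,\chi}\quot G$. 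Because $X^{ss,\chi}/G$ is Deligne--Mumford, every point of $X^{ss,\chi}$ has finite stabilizer, so this image has dimension $\dim(X^{ss,\chi}\cap X^u)-\dim G\le \dim X^u-\dim G\le 1$.

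I do not foresee any serious obstacle. The required ingredients---$\CC^\ast$-equivariance of $\theta$, the cone structure of $X\quot G$, upper semi-continuity of fiber dimension, and the dimension formula for a GIT quotient under a finite-stabilizer hypothesis---are all standard; the only real content is the observation that the $\CC^\ast$-symmetry lets one contract an arbitrary fiber to the central one, reducing a global bound to a single computation over the cone point.
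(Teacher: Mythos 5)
Your proof is correct and follows essentially the same route as the paper's: reduce by semi-continuity to bounding $\dim\theta^{-1}(\bar 0)$, identify $\gamma^{-1}(\bar 0)\cap X^{ss,\chi}$ with $X^u\cap X^{ss,\chi}$, and subtract $\dim G$ using that $\pi$ has equidimensional fibers thanks to the finite-stabilizer hypothesis. The only difference is that you spell out the reduction to the central fiber carefully (via $\CC^\ast$-equivariance of $\theta$ and the cone structure of $X\quot G$), a step the paper dispatches with the single phrase ``by semi-continuity.''
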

\begin{proof}
We refer to the diagram \eqref{eq:maps}.
By semi-continuity it is sufficient to bound the dimension of $\theta^{-1}(\bar{0})$, where $\bar{0}=\pi(0)$. Now $\theta^{-1}(\bar{0})=\pi(\tilde{\theta}^{-1}(\gamma^{-1}(\bar{0})))$. 
Since the fibers of $\pi$ have constant dimension $\dim G$ we deduce
\begin{multline*}
\dim \theta^{-1}(\bar{0})=\dim(\gamma^{-1}(\bar{0})\cap X^{ss,\chi})-\dim G\\
=\dim  (X^u\cap X^{ss,\chi})-\dim G\le \dim X^u-\dim G\qedhere.
\end{multline*}
\end{proof}

\begin{proof}[Proof of Proposition \ref{prop:main}]
We claim that we can apply Corollary \ref{cor:NCCR} to obtain an NCCR. 
 We need to verify that there exists $\chi\in X(G)$ such  $X=W^\vee$, $G$ $\chi$ satisfy assumptions of Theorem \ref{th:dim1}. 
By Remark \ref{rem:dm}, we may choose $\chi\in X(G)$ such that $X^{ss,\chi}/G$ is a DM stack. Moreover, the fibers of $\theta$ have dimension $\leq 1$ by Proposition \ref{prop:fibers}. 
\end{proof}

\begin{corollary} 
\label{cor:nccr2} Assume $X=W^\vee$ for a $G$-representation $W$.
If $W$ is generic and $\dim X\quot G=\dim X-\dim G\le 3$ then 
 the fibers of $\theta$ have dimension $\le 1$. In particular we recover the result by Broomhead that ``affine Gorenstein toric singularities of dimension $3$ have an NCCR''.
\end{corollary}
\begin{proof}
Let   $\beta_1,\ldots,\beta_d$ be the weights of $W$.
  The fact that $W$ is generic implies that for every $0\neq
  \lambda\in Y(G)$ we have that there are at least two $i$ such that
  $\langle \lambda,\beta_i\rangle >0$ (as otherwise $X-X^{\mathbf{s}}$ contains a codimension $1$ variety given by the vanishing of the  coordinate $x_i$ corresponding to the only $i$ for which $\la\lambda,\beta_i\ra>0$). Hence $\dim X^u\le \dim X-2$. 
Thus $\dim X^u-\dim G\le \dim X-\dim G -2\le 3-2=1$. In other words \eqref{eq:comb} holds.

For the last statement we employ Proposition \ref{prop:main} after noting that every affine toric variety $X$ can be written in the form $W^\vee\quot G$ for a generic $G$-representation~$W$. 
This is explained e.g. in \cite[\S11.6.1]{SVdB}. If $X$ is Gorenstein then of course so is $W^\vee\quot G$ and this is equivalent to $W$ being unimodular by \cite{Knop2}.
\end{proof}
Note that \eqref{eq:comb} may hold for higher dimensional $X\quot G$. 
Below we recall an example from \cite{SVdB} of a $4$-dimensional variety $X\quot G$ which does not have a toric NCCR. For this variety \eqref{eq:comb} is satisfied, and it therefore has a (non-toric) NCCR by Proposition \ref{prop:main} which we explicitly construct. 
\begin{example} 
\label{ex:toric}
Consider the example \cite[\S10.1]{SVdB}. Then we have that
$G=G^2_m$ is a two dimensional torus and (after the identifying $X(G)\cong \ZZ^2$) the weights $(\beta_i)_i$
of $W$
are given by  $(3,0),(1,1),(0,3),(-1,0),(-3,-3),(0,-1)$ (see Figure \ref{figure1}). We have $X^u=\bigcup_{1\leq i\leq 6}\{x_i=x_{i+1}=x_{i+2}=0\}$ with cyclic indices, hence $\dim X^u=3$ and 
moreover $W$ is generic and unimodular so that by Proposition \ref{prop:main}
$R=\Sym(W)^G=k[x_2x_4x_6,x_1x_3x_5,x_1x_4^3,x_3x_6^3,x_2^3x_5]\cong k[a, b, c, d, e]/(a^3 b -cde)$
 has an NCCR. However this NCCR is not toric which is the same as saying that it is not given by a module of covariants (a module of the form 
$M(U)=(U\otimes SW)^G$ for a finite dimensional $G$-representation $U$). 
In fact an NCCR given by a module of covariants  does not exist in this case as is shown 
in loc.\ cit.

\begin{figure}[H]{}\label{fig2}
\begin{tikzpicture}[scale=0.500000]
\path[draw,color=lightgray] (-4.000000,-4.000000) grid (4.000000,4.000000);
\path[draw,fill=vino,color=vino] (1.000000,1.000000) circle [radius=0.130000];
\node[inner sep=0pt,scale=0.5] at (1.25,1.2500000) {$2$};
\path[draw,fill=vino,color=vino] (-1.000000,0.000000) circle [radius=0.130000];
\node[inner sep=0pt,scale=0.5] at (-0.75,0.2500000) {$4$};
\path[draw,fill=vino,color=vino] (0.000000,-1.000000) circle [radius=0.130000];
\node[inner sep=0pt,scale=0.5] at (0.25,-0.7500000) {$6$};
\path[draw,fill=vino,color=vino] (-3.000000,-3.000000) circle [radius=0.130000];
\node[inner sep=0pt,scale=0.5] at (-2.75,-2.7500000) {$5$};
\path[draw,fill=vino,color=vino] (3.000000,0.000000) circle [radius=0.130000];
\node[inner sep=0pt,scale=0.5] at (3.25,0.2500000) {$1$};
\path[draw,fill=vino,color=vino] (0.000000,3.000000) circle [radius=0.130000];
\node[inner sep=0pt,scale=0.5] at (0.25,3.2500000) {$3$};
\path[draw,color=ruta] (0.000000,0.000000) circle [radius=0.180000];
\path[draw,color=ruta] (1.000000,0.000000) circle [radius=0.180000];
\path[draw,color=ruta] (-1.000000,0.000000) circle [radius=0.180000];
\path[draw,color=ruta] (-2.000000,0.000000) circle [radius=0.180000];
\path[draw,color=ruta] (0.000000,-1.000000) circle [radius=0.180000];
\path[draw,color=ruta] (-1.000000,-1.000000) circle [radius=0.180000];
\path[draw,color=ruta] (-2.000000,-1.000000) circle [radius=0.180000];
\path[draw,color=ruta] (1.000000,1.000000) circle [radius=0.180000];
\path[draw,color=ruta] (0.000000,1.000000) circle [radius=0.180000];
\path[draw,color=ruta] (-1.000000,1.000000) circle [radius=0.180000];
\path[draw,color=ruta] (1.000000,2.000000) circle [radius=0.180000];
\path[draw,color=ruta] (0.000000,2.000000) circle [radius=0.180000];
\path[draw,color=ruta,fill=ruta2] (2.000000,1.000000) circle [radius=0.180000];
\path[draw,color=black,thick] (0.000000,-0.300000) -- (0.000000,0.300000);
\path[draw,color=black,thick] (-0.300000,0.000000) -- (0.300000,0.000000);
\path[draw,fill=black,color=black] (-3.000000,-3.000000) circle [radius=0.0700];
\path[draw,fill=black,color=black] (-3.000000,-2.000000) circle [radius=0.0700];
\path[draw,fill=black,color=black] (-3.000000,-1.000000) circle [radius=0.0700];
\path[draw,fill=black,color=black] (-3.000000,0.000000) circle [radius=0.0700];
\path[draw,fill=black,color=black] (-2.000000,-3.000000) circle [radius=0.0700];
\path[draw,fill=black,color=black] (-2.000000,-2.000000) circle [radius=0.0700];
\path[draw,fill=black,color=black] (-2.000000,-1.000000) circle [radius=0.0700];
\path[draw,fill=black,color=black] (-2.000000,0.000000) circle [radius=0.0700];
\path[draw,fill=black,color=black] (-2.000000,1.000000) circle [radius=0.0700];
\path[draw,fill=black,color=black] (-1.000000,-3.000000) circle [radius=0.0700];
\path[draw,fill=black,color=black] (-1.000000,-2.000000) circle [radius=0.0700];
\path[draw,fill=black,color=black] (-1.000000,-1.000000) circle [radius=0.0700];
\path[draw,fill=black,color=black] (-1.000000,0.000000) circle [radius=0.0700];
\path[draw,fill=black,color=black] (-1.000000,1.000000) circle [radius=0.0700];
\path[draw,fill=black,color=black] (-1.000000,2.000000) circle [radius=0.0700];
\path[draw,fill=black,color=black] (0.000000,-3.000000) circle [radius=0.0700];
\path[draw,fill=black,color=black] (0.000000,-2.000000) circle [radius=0.0700];
\path[draw,fill=black,color=black] (0.000000,-1.000000) circle [radius=0.0700];
\path[draw,fill=black,color=black] (0.000000,0.000000) circle [radius=0.0700];
\path[draw,fill=black,color=black] (0.000000,1.000000) circle [radius=0.0700];
\path[draw,fill=black,color=black] (0.000000,2.000000) circle [radius=0.0700];
\path[draw,fill=black,color=black] (0.000000,3.000000) circle [radius=0.0700];
\path[draw,fill=black,color=black] (1.000000,-2.000000) circle [radius=0.0700];
\path[draw,fill=black,color=black] (1.000000,-1.000000) circle [radius=0.0700];
\path[draw,fill=black,color=black] (1.000000,0.000000) circle [radius=0.0700];
\path[draw,fill=black,color=black] (1.000000,1.000000) circle [radius=0.0700];
\path[draw,fill=black,color=black] (1.000000,2.000000) circle [radius=0.0700];
\path[draw,fill=black,color=black] (1.000000,3.000000) circle [radius=0.0700];
\path[draw,fill=black,color=black] (2.000000,-1.000000) circle [radius=0.0700];
\path[draw,fill=black,color=black] (2.000000,0.000000) circle [radius=0.0700];
\path[draw,fill=black,color=black] (2.000000,1.000000) circle [radius=0.0700];
\path[draw,fill=black,color=black] (2.000000,2.000000) circle [radius=0.0700];
\path[draw,fill=black,color=black] (2.000000,3.000000) circle [radius=0.0700];
\path[draw,fill=black,color=black] (3.000000,0.000000) circle [radius=0.0700];
\path[draw,fill=black,color=black] (3.000000,1.000000) circle [radius=0.0700];
\path[draw,fill=black,color=black] (3.000000,2.000000) circle [radius=0.0700000];
\path[draw,fill=black,color=black] (3.000000,3.000000) circle [radius=0.0700000];
\node[inner sep=0pt,scale=0.5] at (1.25,-1.7500000) {$\chi$};
\end{tikzpicture}
\caption{\textcolor{vino}{\large${\bullet}$}$^i$ weights
, \textcolor{black}{\tiny$\bullet$} CM weights, \textcolor{ruta}{\LARGE$\circ$}\textcolor{ruta2}{\LARGE$\bullet$} $\Lscr$}
\label{figure1}
\end{figure}

We will now describe the construction of  an explicit NCCR for this example. We have not literally followed the proof of Proposition \ref{prop:main} which appeared
computationally too expensive. 
Instead we  obtain an NCCR using a similar but more adhoc procedure.

First we give some heuristic motivation for the construction. Assuming an appropriately strengthened version of the Bondal-Orlov conjecture asserting 
that all (stacky) commutative and non-commutative crepant 
resolutions are derived equivalent \cite{BonOrl,IyamaWemyssNCBO,VdB32} the number of indecomposable summands of the module defining a  non-commutative crepant resolution that we need 
is given by 
the rank of $K_0$ of a (stacky) crepant commutative resolution of $\Spec R$ (since $K_0$ is invariant under  derived equivalence).

It is easy to verify that $\Spec R$ as a (singular) toric variety
corresponds to the fan given by the cone over a $3$-dimensional polytope $P$ shown in
Figure \ref{fig1}. The volume of this polytope equals $13/6$,
therefore the rank of $K_0$ of the stacky crepant resolution of $\Spec R$,
corresponding to a triangulation of $P$, is $13$ (see Theorem
\ref{BorisovHorja}).
\begin{figure}[h]{}
\centering
\includegraphics[width=6cm]{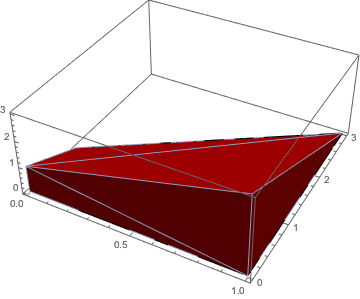}
\caption{}
\label{fig1}
\end{figure}

After these heuristics we describe  the actual construction. 
Let $\Lscr$\footnote{This notation is in accordance with notation in \cite[\S11]{SVdB5} which we will refer to in the sequel. It should not be confused with the notation for line bundles used in the previous sections.} be given by weights corresponding to encircled  dots in Figure \ref{fig2}
and let $\Lscr'=\Lscr\setminus \{(2,1)\}$. 
We write $M(\mu)=(\mu\otimes SW)^G$. 
The endomorphism ring  $\End_R(\oplus_{\mu\in \Lscr'}M(\mu))$ 
 is Cohen-Macaulay (see \cite[Example 10.1]{SVdB}). 
Since $|\Lscr'|=12$  
we expect to need  a single additional indecomposable $R$-module $K$ such that 
$\Lambda=\End_R(\oplus_{\mu\in \Lscr'}M(\mu)\oplus K)$ is an NCCR. 
By loc. cit. $K$ cannot be a module of covariants. 

We define $K$ by the exact sequence
\begin{equation}\label{covses}
\xymatrix{
0\r K\r M(0,-1)\oplus M(1,1)\oplus M(-1,1)\xrightarrow{\psi} M(2,1)\r 0, }
\end{equation}
where $\psi(r_1,r_2,r_3)=r_1 d+r_2 a+ r_3 ab/c$.
(Note that $M(0,-1)\cong (a,e)$, $M(1,1)\cong (a,d)$, $M(-1,1)\cong (a^2,ac,cd)$, $M(2,1)\cong (a^2,ad,de)$.) 
Considering $M(\mu)$ as subsets of $\Sym(W)$ 
we can write $\psi(r_1,r_2,r_3)=r_1 x_2x_5+r_2 x_4+r_3 x_3x_5$.

It is easy to check that $\Lambda$ is a Cohen-Macaulay $R$-module (using e.g.\ Macaulay2), suggesting that it might be an NCCR of $R$. Below we verify this fact by constructing an appropriate tilting
 bundle on a particular stacky resolution $X^{ss,\chi}/G$ of $\Spec R$.

Let $\chi=(1,-2)$. We claim that $\Escr=\bigoplus_{\mu\in \Lscr}\mu\otimes \Oscr_{X^{ss,\chi}}$ generates $D(X^{ss,\chi}/G)$.  
One can use  a similar algorithm as in the proof of \cite[Theorem 1.5.1]{SVdB}. We refer to  \cite[\S11.1-3]{SVdB} for some unexplained notation.   
In loc. cit. the complexes $C_{\lambda,\mu}$ with  cohomology supported on $X^{\lambda,\geq 0}$ relate projectives $P_\nu$, $\nu\in X(T)$, in $D(X/G)$. 
Thus, if $\la\chi,\lambda\ra<0$ then $C_{\lambda,\mu}$ is exact when restricted to $X^{ss,\chi}/G$  
(recall that $X^{ss,\chi}$ consists of $x\in X$ such that if $\lambda\in Y(T)$ is such that $\lim_{t\to 0}\lambda(t)x$ exists then $\la\lambda,\chi\ra\geq 0$ which is equivalent to saying that $-\chi$ is in the cone generated by $(\beta_i)_{x_i\neq 0}$). 
Assume that $\tilde{\Lscr}\subset X(T)$ is such that $\nu\otimes \Oscr_{X^{ss,\chi}}$, $\nu\in \tilde{\Lscr}$, belong to the subcategory of $D(X^{ss,\chi}/G)$  generated by $\Escr$  (e.g.\ $\tilde{\Lscr}=\Lscr$).
Then we may enlarge $\tilde{\Lscr}$ by $\nu\in X(T)$ if for some $\la\lambda,\chi\ra<0$ all components except for $\nu\otimes\Oscr_{X^{ss,\chi}}$ of either of the complexes $C_{\lambda,\nu}$, $C_{\lambda,\nu-\sum_{\la\lambda,\beta_i\ra>0} \beta_i}$   are of the form $\mu\otimes \Oscr_{X^{ss,\chi}/G}$ for $\mu\in \tilde{\Lscr}$. 
Note that if $\tilde{\Lscr}$ contains $X(T)\cap \Sigma$, then we may enlarge $\tilde{\Lscr}$ to $X(T)$. (See also the proof of \cite[Theorem 3.2]{HLSam}.)

In our example we may easily verify by hand (or by computer, cf. \cite[Remark 11.3.2]{SVdB}) that we can indeed enlarge $\Lscr$ to $\Sigma\cap X(T)$ (where in this case $\Sigma\cap X(T)$ is given by weights corresponding to  black dots in the above picture), and therefore $\Escr$  generates $D(X^{ss,\chi}/G)$. 

Since the endomorphism ring  $\End_R(\oplus_{\mu\in \Lscr'}M(\mu))$ is Cohen-Macaulay, we have $\Ext^1_{X^{ss,\chi}/G}(\Escr',\Escr')=0$ for $\Escr'=\bigoplus_{\mu\in\Lscr'}\mu\otimes \Oscr_{X^{ss,\chi}}$  (see \cite[Corollary 3.3.2]{Vdb1}).

Denote $M=\{(0,-1),(1,1),(-1,1)\}\subset \Lscr'$. 
Let $\tilde \psi:\bigoplus_{\mu\in M} \mu\otimes \Oscr_{X^{ss,\chi}}\to \mu_{(2,1)}\otimes\Oscr_{X^{ss,\chi}}$ be the lift of $\psi$ to $X^{ss,\chi}/G$, and let $\Kscr$ be the lift of $K$ (see the proof of Corollary \ref{cor:dim1}). 
We claim that \eqref{covses} induces an exact sequence 
\begin{equation}\label{stackses}
0\r \Kscr\r \bigoplus_{\mu\in M} \mu\otimes \Oscr_{X^{ss,\chi}}\xrightarrow{\tilde{\psi}} \mu_{(2,1)}\otimes\Oscr_{X^{ss,\chi}}\r 0.
\end{equation}
Since $\tilde\psi$ is a restriction of the map $\Psi:\oplus_{\mu\in M}\mu\otimes \Oscr_X\to \mu_{(2,1)}\otimes \Oscr_X$, induced from $\psi$, we need to check that the cokernel $\Nscr$ of this map has support in the complement of $X^{ss,\chi}$. 
The support of the cokernel is defined by the ideal $(x_2x_5,x_4,x_3x_5)$. Let $x=(x_1,\dots,x_6)$ belong to the support. Then either $x_2=x_3=x_4=0$ or $x_4=x_5=0$. Since $-\chi$ does not lie in the cone generated  by neither $\beta_1,\beta_5,\beta_6$ nor $\beta_1,\beta_2,\beta_3,\beta_6$, $x$ does not belong to $X^{ss,\chi}$.

Moreover, any map from $\bigoplus_{\mu\in\Lscr'}\mu \otimes \Oscr_X$ to $\mu_{(2,1)}\otimes \Oscr_X$ factors through $\Psi$, since its image is zero in $\Nscr$ which easily follows from the fact that $\Lscr'$ does not intersect the semigroups generated by $\beta_1,\beta_5,\beta_6$ and $\beta_1,\beta_2,\beta_3,\beta_6$, resp., shifted by $\mu_{(2,1)}$. 
Therefore, employing again the proof of Corollary \ref{cor:dim1},  
the map $\Hom_{X^{ss,\chi}/G}(\mu_i\otimes \Oscr_{X^{ss,\chi}},\bigoplus_{\mu\in M}\mu\otimes \Oscr_{X^{ss,\chi}})\to \Hom_{X^{ss,\chi}/G}(\mu_i\otimes \Oscr_{X^{ss,\chi}},\mu_{(2,1)}\otimes \Oscr_{X^{ss,\chi}})$ induced from \eqref{stackses} is surjective.
Thus, $\Ext^1_{X^{ss,\chi}/G}(\mu_i\otimes \Oscr_{X^{ss,\chi}},\Kscr)=0$.

Applying $\Hom_{X^{ss,\chi}/G}(-,\mu_i\otimes \Oscr_{X^{ss,\chi}})$ and 
$\Hom_{X^{ss,\chi}/G}(-,\Kscr)$ to \eqref{stackses} further implies that 
$\Ext^1_{X^{ss,\chi}/G}(\Kscr,\mu_i\otimes \Oscr_{X^{ss,\chi}})=0$ and 
$\Ext^1_{X^{ss,\chi}/G}(\Kscr,\Kscr)=0$. 

Since $\Escr$ generates $D(X^{ss,\chi}/G)$, the same holds for $\Fscr=\bigoplus_{\mu\in \Lscr'}\mu\otimes\Oscr_{X^{ss,\chi}}\bigoplus\Kscr$ by \eqref{stackses},  
and we moreover have $\Ext^1_{X^{ss,\chi}/G}(\Fscr,\Fscr)=0$.   
Thus, $\End_R(\bigoplus_{\mu\in \Lscr'}M(\mu)\bigoplus K)$ is an NCCR of $R$ by Corollary \ref{cor:NCCR}.
\end{example}

\begin{remark}
The discussion on the ``universality'' of $\Psi$ in fact implies that $\psi$ in \eqref{covses} is the minimal $\operatorname{add} (\oplus_{\mu\in\Lscr'}M(\mu))$-approximation of $M(2,1)$ 
 in the sense that every map $M(\mu)\r M(2,1)$ for $\mu\in \Lscr'$ factors through~$\psi$.
\end{remark}

\begin{remark}
Let $S=k[a,b,c,d,e]$. 
The module $K$ introduced in the above example may also be described 
by a matrix factorization $(d_0,d_1)$ of $f=a^3b-cde$:
\[
d_0=\left(
\begin{array}{cccc}
ab&0&ce&0\\
0&ab&-ac&-cd\\
-d&0&-a^2&0\\
-a&-e&0&a^2
\end{array}
\right),\quad\quad
d_1=\left(
\begin{array}{cccc}
a^2&0&ce&0\\
0&a^2&-ac&cd\\
-d&0&-ab&0\\
a&e&0&ab
\end{array}
\right),
\]
where $d_0,d_1:S^4\to S^4$ and $K=\coker(d_0)$.
\end{remark}

\appendix
\section{Grothendieck group of a toric DM stack}
Here we recall some results about the Grothendieck group of a toric DM stack. 
We mainly follow \cite{BorisovHorja}.

Let $\Sigma$ be a fan, refining a cone over an $n-1$-dimensional convex lattice polyhedron $P\times \{1\}$. 
Let ${\bf \Sigma}$ be a stacky fan $(\Sigma,(v_i)_{i=1}^l)$, where $v_i\in \ZZ^{n-1}\times \{1\}$ define $1$-dimensional cones in $\Sigma$. 
We denote by $P_{\bf \Sigma}$ (resp. $P_\Sigma$) the corresponding toric DM stack (resp. toric variety). 
Note that $P_{\bf \Sigma}$ (resp. $P_\Sigma$) equals $Y^{ss,\chi}/ G$  (resp. $Y^{ss,\chi}\quot G $) for an action of $G\subset {k^*}^l$ on $Y=k^n$ via characters determined by the images of the generators $e_i\in \ZZ^l$ ($e_i$ denotes the $i$-th generator) in $\ZZ^l/ \rho(M)\cong X(G)$ ($\rho:m\mapsto (\la m,v_i\ra)_i$) and a generic $\chi\in X(G)$ (see \cite[Section 2]{BorisovHorja}, \cite[Theorem 15.1.10]{CoxLittleSchenck}). 

Let $\mu_i=\bar{e_i}\in X(G)$.   
We denote by $R_i$ the class of the invertible sheaf $\mu_i\otimes \Oscr_{Y^{ss,\chi}}$ in $K_0(P_{\bf \Sigma})$. 

\begin{theorem}\cite{BorisovHorja}\label{BorisovHorja}
Let $P_{\bf\Sigma}$ be a toric DM stack. 
Let $B$ be the quotient of the Laurent polynomial ring $\ZZ[x_1,x_1^{-1},\dots, x_l,x_l^{-1}]$ by the ideal generated by the relations 
\begin{itemize}
\item 
$\prod_{i=1}^l x_i^{\la m,v_i\ra}=1$ for all $m\in M$,
\item
$\prod_{i\in I}(1-x_i)=0$ for any set $I\subseteq \{1,\dots,l\}$ such that $v_i$, $i\in I$, are not contained in any cone of $\Sigma$.
\end{itemize}
Then the map $\phi:B\to K_0(P_{\bf\Sigma})$ which sends $x_i$ to $R_i$
is an isomorphism. If $P_\Sigma$ is a triangulation of a cone over a polyhedron $P$, then ${\rm rk} K_0(P_{\bf \Sigma})=(n-1)!{\rm Vol} (P)$.
\end{theorem}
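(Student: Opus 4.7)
The plan is to exploit the quotient presentation $P_{\bf\Sigma}=[Y^{ss,\chi}/G]$ to identify $K_0(P_{\bf\Sigma})$ with the $G$-equivariant $K$-group $K_0^G(Y^{ss,\chi})$, and then to verify separately that $\phi$ is well-defined, surjective, and injective. The rank formula will follow from a stratification by torus-invariant strata.

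I would first check that $\phi$ respects the two families of relations. For any $m\in M$ one has $\rho(m)=(\langle m,v_i\rangle)_i\in\rho(M)$, hence $\sum_i\langle m,v_i\rangle\mu_i=0$ in $X(G)=\ZZ^l/\rho(M)$, so $\bigotimes_i(\mu_i\otimes\Oscr_Y)^{\otimes\langle m,v_i\rangle}$ is the trivial $G$-equivariant line bundle and the first relation descends to $K_0^G(Y^{ss,\chi})$. For the second family, each coordinate function $x_i$ is a $G$-semi-invariant global section of $\mu_i^{-1}\otimes\Oscr_Y$, so in $K_0^G(Y)$ one has $[\Oscr_{\{x_i=0\}}]=1-R_i^{-1}$; if $\{v_i\}_{i\in I}$ lies in no cone of $\Sigma$ then the common zero locus of the $(x_i)_{i\in I}$ is contained in the unstable locus $Y\setminus Y^{ss,\chi}$, so the Koszul complex on these sections is exact after restriction to $Y^{ss,\chi}$, producing $\prod_{i\in I}(1-R_i^{-1})=0$, which rearranges to the stated relation after multiplication by units.

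Surjectivity is a short step: since $Y=k^n$ is a linear $G$-representation one has $K_0^G(Y)\cong \ZZ[X(G)]$, generated as a ring by the $\mu_i^{\pm 1}$, and Quillen localization yields a surjection $K_0^G(Y)\to K_0^G(Y^{ss,\chi})$, so the $R_i^{\pm 1}$ generate the target.

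The main obstacle is injectivity together with the rank formula; here I would follow the approach of Borisov--Horja. Stratify $Y^{ss,\chi}$ by the locally closed $G$-stable pieces $Y_\sigma\subset Y$ on which $x_i$ is nonzero precisely for $i\in\sigma$, indexed by the cones $\sigma$ of $\Sigma$, and assemble the corresponding localization long exact sequences to show inductively that no further relations beyond the two listed families are forced in $K_0^G(Y^{ss,\chi})$. This simultaneously identifies the rank of $K_0(P_{\bf\Sigma})$ with $\sum_{\sigma}|\det(v_i)_{i\in\sigma}|$ summed over the maximal cones $\sigma$ of $\Sigma$. When $\Sigma$ is the cone over a triangulation of the $(n-1)$-dimensional lattice polytope $P$, each top-dimensional simplex $\tau$ contributes $(n-1)!\,\Vol(\tau)$ to this sum, so the total rank equals $(n-1)!\,\Vol(P)$.
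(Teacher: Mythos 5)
Your proposal is correct in outline but takes a genuinely different route from the paper on the rank computation, while for the isomorphism it essentially reconstructs what the paper simply cites. The paper's proof of the first claim is a direct citation of \cite[Theorem 4.10]{BorisovHorja}; your sketch (well-definedness via $\rho(M)$ and Koszul complexes supported in the unstable locus, surjectivity from $K_0^G(Y)\cong\ZZ[X(G)]$ plus Quillen localization, injectivity deferred to the stratification argument) is a faithful reprise of Borisov--Horja's argument rather than a new proof, and the injectivity step is in any case left at the level of an assertion. For the rank formula the two approaches genuinely diverge: the paper takes the Hilbert-series expression $(1-t)^l\sum_{n\in N\cap\sigma}t^{\deg(n)}$ from \cite[Remark 3.11, Theorem 5.3]{BorisovHorja}, recognizes $\sum_{d}\#(dP\cap\ZZ^{n-1})t^d$ as the Ehrhart series of $P$, and reads off the value at $t=1$ from the leading Ehrhart coefficient; you instead use the fact that for a simplicial stacky fan the rank equals $\sum_{\sigma\text{ maximal}}\lvert\det(v_i)_{i\in\sigma}\rvert$ and observe that each top-dimensional simplex $\tau$ of the triangulation, having vertices on the height-one plane, contributes $(n-1)!\Vol(\tau)$, which sums to $(n-1)!\Vol(P)$. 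Your route is more geometric and avoids the Ehrhart machinery, but it trades the Hilbert-series citation for the (standard, but here unproved) fact that the rank is the sum of the indices of the maximal cones. One small inherited slip: the Cox construction here has $Y=k^l$ (one coordinate per ray), not $k^n$; this typo is present in the paper and does not affect either argument.
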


\begin{proof}
First part follows by \cite[Theorem 4.10]{BorisovHorja}, while the last statement follows from \cite[Remark 3.11, Theorem 5.3]{BorisovHorja}. 
Indeed, we only need to show that $(1-t)^l\sum_{n\in N\cap \sigma}t^{{\rm deg}(n)}$ evaluated at $1$ equals $(n-1)!\Vol (P)$. 

Note that $N\cap \s=\bigcup_{d\in \NN} dP\times \{d\}$, and ${\rm deg}(n)=d$ for $n\in dP\times \{d\}$. Moreover, the number of lattice points in $dP\times \{d\}$ equals ${\rm Ehr}_P(d)$, where ${\rm Ehr}$ denotes the Ehrhart polynomial (see e.g. \cite[Theorem 9.4.2]{CoxLittleSchenck}). 
 Since the degree of $\Ehr$ is $n-1$ (as $P$ is $n-1$-dimensional) and its leading coefficient equals $(n-1)!\Vol(P)$ (see e.g. \cite[Exercise 9.4.7]{CoxLittleSchenck}) we obtain that the above sum evaluated at $1$ equals $(n-1)!\Vol(P)$. 
\end{proof}

\bibliographystyle{amsalpha}
\bibliography{nccr}
\end{document}